\documentclass[11pt]{amsart}
\usepackage[active]{srcltx}
\usepackage{amsmath, amsfonts,amsthm,times,graphics,color}
 \makeatletter
\renewcommand*\subjclass[2][2000]{%
  \def\@subjclass{#2}%
  \@ifundefined{subjclassname@#1}{%
    \ClassWarning{\@classname}{Unknown edition (#1) of Mathematics
      Subject Classification; using '1991'.}%
  }{%
    \@xp\let\@xp\subjclassname\csname subjclassname@#1\endcsname
  }%
}
 \makeatother
\usepackage{enumerate,amssymb,  mathrsfs,yhmath}%, pdfsync}% ,showkeys, pdfsync}

\newtheorem{theorem}{Theorem}[section]
\newtheorem{lemma}[theorem]{Lemma}
\newtheorem*{lemma*}{Lemma}
\newtheorem{proposition}[theorem]{Proposition}

\def\1ton{1,2,\ldots,n}

\def\ID{{\Bbb D}}

\usepackage{amssymb}%, pdfsync}
\usepackage{amsthm}
\usepackage{mathrsfs, amsfonts, amsmath}
\usepackage{graphicx}

\newcommand{\onto}{\xrightarrow[]{{}_{\!\!\textnormal{onto}\!\!}}}

\newcommand{\A}{\mathbb{A}}
\newcommand{\R}{\mathbb{R}}

\newcommand{\X}{\mathbb{X}}

\newcommand{\Y}{\mathbb{Y}}

\theoremstyle{definition}

\newtheorem{example}[theorem]{Example}

\theoremstyle{remark}
\newtheorem{remark}[theorem]{Remark}

\numberwithin{equation}{section}

%    Absolute value notation
\newcommand{\abs}[1]{\lvert#1\rvert}
\DeclareMathOperator{\sign}{sign} 
 \DeclareMathOperator{\id}{id}
\DeclareMathOperator{\Mod}{Mod}

\def\XXint#1#2#3{{\setbox0=\hbox{$#1{#2#3}{\int}$}
\vcenter{\hbox{$#2#3$}}\kern-.5\wd0}}

\def\ge{\geqslant}
\setcounter{tocdepth}{2}
\begin{document}

\title{Gaussian curvature of minimal graphs in $M\times \mathbb{R}$}  \subjclass{Primary 53A10 }

%\date{11 October, 2005}

\keywords{Harmonic mappings, minimal surfaces}
\author{David Kalaj }
\address{University of Montenegro, Faculty of Natural Sciences and
Mathematics, Cetinjski put b.b. 81000 Podgorica, Montenegro}
\email{davidk@ucg.ac.me}

%\author{Olivia Constantin }
%\address{University of Vienna, Department of Mathematics,  Oskar-Morgenstern-Platz 1, 1090 Wien}
%\email{olivia.constantin@univie.ac.at}

\begin{abstract}
In this paper, we consider minimal graphs in the three-dimensional Riemannian manifold $M\times\mathbb{R}$. We mainly estimate the Gaussian curvature of such surfaces. We consider the minimal disks and minimal graphs bounded by two Jordan curves in parallel planes. The key to the proofs is the Weierstrass representation of those surfaces via $\wp-$harmonic mappings. We also prove some Schwarz lemma type results and some Heinz type results for harmonic mappings between geodesic disks in Riemannian surfaces.

\end{abstract}
\maketitle
\tableofcontents
\section{Introduction}

Assume that $\Omega$ is a domain in the complex plane. Moreover, assume that $\wp$ is a smooth positive metric defined in $\Omega$ with bounded Gauss
curvature $\mathcal{K}$ where
\begin{equation}\label{gaus}\mathcal{K}_\wp(z)=-\frac{\Delta \log \wp(z)}{\wp^2(z)}.\end{equation} We assume $\sup_{z\in \Omega} |\mathcal{K}_\wp(z)|<\infty$ and $\wp$ has a finite area defined by
$$\mathcal{A}(\wp)=\int_{\Omega}\wp^2(w) du dv, \ \ w=u+iv.$$ Then we define the Riemannian  manifold  $M=M_0 \times \mathbb{R}$, where $M_0= (\Omega, \wp)$, where the metric of $M$ is \begin{equation}\label{hbar}d\hbar^2 (z) =\wp^2(z) |dz|^2 + dt^2.\end{equation}

Let $N_0=(D, \mathbf{1})$ and $M_0=(\Omega,\wp)$ be
Riemann surfaces with metrics $\mathbf{1}$ and $\wp$, respectively, where $D$ and $\Omega$ are planar domains. Here $\mathbf{1}$ is the Euclidean metric. If
a mapping $f:N_0\to M_0,$ is $C^2$, then $f$ is said to be harmonic (to make difference with Euclidean harmonic we will sometimes say $\wp$-harmonic) if
\begin{equation}\label{el}
f_{z\overline z}+{(\log \wp^2)}_w\circ f\cdot  f_z\,f_{\bar z}=0,
\end{equation}
where $z$ and $w$ are the local parameters on $N_0$ and $M_0$
respectively. Also $f$ satisfies \eqref{el} if and only if its Hopf
differential
\begin{equation}\label{anal}
\mathrm{Hopf}(f)=\wp^2 \circ f f_z\overline{f_{\bar z}}
\end{equation} is a
holomorphic quadratic differential on $M$. Let $$|\partial
f|^2:=\wp^2(f(z))\left|\frac{\partial
f}{\partial z}\right|^2\text{ and }|\bar \partial
f|^2:={\wp^2(f(z))}\left|\frac{\partial
f}{\partial \bar z}\right|^2$$ where $\frac{\partial f}{\partial z}$
and $\frac{\partial f}{\partial \bar z}$ are standard complex
partial derivatives. The $\wp-$Jacobian is defined by
$$J(z,f):=|\partial f(z)|^2-|\partial \bar f(z)|^2.$$ If $f$ is sense preserving,
then its $\wp-$Jacobian is positive.   For $g:N_0 \mapsto M_0$ the $\wp-$
\emph{Dirichlet energy} is defined by
\begin{equation}\label{ener1} \mathcal{E}^\wp[g]=\int_{N_0} (|\partial
f|^2+ |\bar \partial
f|^2)dx\wedge dy=\int_{D} \wp^2(f(z))(|f_z|^2+|f_{\bar z}|^2)dxdy.
\end{equation}
% Now consider the functional $f\to E(f,\rho)$ and minimize it over all mappings $f$
% satisfying the condition $f|_{\partial \X}=g$
Assume that the energy integral of $f$ is bounded. Then a
stationary point $f$ of $\mathcal{E}^\wp[g_t]$, i.e. the solution of the equation $\partial_t \mathcal{E}^\wp[g_t]|_{t=0}=0$, $g_t=g+t h$, $t\in(-\epsilon,\epsilon)$, where $h$ has a compact support in $D$, is a harmonic
mapping i.e. it satisfies equation \eqref{el}. A different variation produces the so-called Noether harmonic maps in Section~\ref{hopsec}. Recall that $N_0=(D,\mathbf{1})$. For the last
definition and some important properties of harmonic maps see
\cite[Chapter~8]{jost} and \cite{jostconf}.

It follows from the definition that, if $a$ is conformal
and $f$ is harmonic, then $f\circ a$ is harmonic. Moreover, if $b$ is
conformal, $b\circ f$ is also harmonic but with respect to
(possibly) an another metric $\wp_1$.

The Bochner formula \cite[p.~113]{jostconf} for a harmonic mapping  $h$ are $$\Delta \log |\partial h|^2 = -\mathcal{K}_\wp J(h)$$ and $$\Delta \log |\bar\partial h|^2 = \mathcal{K}_\wp J(h).$$

In this paper, we will prove some curvature estimates of a minimal graph $\Sigma$ in $M_0\times \mathbb{R}$. Firstly we will derive an analogous divergence formula for its non-parametric parametrization (Propositions~\ref{lemag} and \eqref{newmin}). Further, we consider disk-type minimal graphs and estimate the "central" curvature (Theorem~\ref{th1}). To prove Theorem~\ref{th1} we previously prove some Schwarz lemma type estimates of the dilatation of $\wp-$harmonic mappings, which we believe are interesting on their own (Theorem~\ref{teo1}). Also, we  prove a Schwarz lemma type estimate for the $\wp-$distance of the $\wp-$harmonic mapping (Theorem~\ref{teo2}). We also need some Heinz type estimate proved in the same theorem. The role of $\wp-$harmonic mappings in a minimal surface $\Sigma$ in $(\Omega,\wp)\times \mathbb{R}$ concerning Weierstrass representation  is analogous to the role of Euclidean harmonic mappings in Euclidean minimal surfaces (cf. \cite{ili}).

%By using Scharz lemma type results we estimate the Gaussian curvature at the point above the center of a geodesic disk (Theorem~\ref{th1}).

Moreover, minimizers of the $\wp-$energy of the diffeomorphic mappings between certain doubly connected domains produce minimal surfaces spanning two Jordan curves lying in parallel planes, provided the original annulus has a modulus bigger than the image annulus. This is done in Section~\ref{sec3}, where is made a connections with minimizers of $\wp-$energy  and minimal surfaces in Riemannian manifolds.

By using the results in Section~\ref{sec3}, we get the following Gaussian curvature at the "equator" of minimal annuli $\Sigma$ (see below Theorem~\ref{apo} for its reformulation ).
\begin{theorem}\label{exam1}

Assume that $\Sigma\subset \mathbb{R}^2\times \mathbb{R}$ is a double-connected minimal graph bounded by two Jordan curves $\gamma_1,\gamma_2$, lying in two horizontal  planes $\Pi_1, \Pi_2$ whose distance is $\mathbf{d}$. Let $\Pi$ be the equidistant plane between $\Pi_1,\Pi_2$. Then for every point $\zeta \in  \Sigma\bigcap \Pi$ we have $$|\mathcal{K}_\Sigma(\zeta)|\le \frac{\pi^2}{\mathbf{d}^2}.$$
Moreover, the tangential plane $T\Sigma_\zeta$ at a point $\zeta\in \Sigma\setminus (\gamma_1\cup \gamma_2)$ is never horizontal nor vertical.

\end{theorem}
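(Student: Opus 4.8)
The plan is to pass to the Weierstrass representation of $\Sigma$ supplied by Section~\ref{sec3} (here $\wp\equiv 1$, so the $\wp$-harmonic parametrization is the classical conformal harmonic one) and to read the curvature off the Gauss map. I normalize so that $\Pi_1=\{t=0\}$, $\Pi_2=\{t=\mathbf d\}$, hence $\Pi=\{t=\mathbf d/2\}$, and write the conformal parametrization with Weierstrass data $(f,g)$, where $g$ is the stereographic Gauss map and $X_3$ is the height. Since $\Sigma$ is a graph, the vertical projection is an orientation-preserving local diffeomorphism, which is precisely the condition $|g|\neq1$; as the parameter domain is an annulus and $g$ is continuous, $|g|>1$ throughout the interior. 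This already yields the first half of the ``moreover'': the tangent plane is vertical iff $|g|=1$, which never occurs.

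For the second half I would show $X_3$ has no interior critical point. The function $\phi_3:=2\partial_z X_3=fg$ is holomorphic, and on each boundary circle $X_3$ is constant and attains an extremum, so by the Hopf boundary-point lemma $\nabla X_3\neq0$ there. A winding-number count — equivalently Riemann--Hurwitz applied to the proper holomorphic map $h:=X_3+iX_3^{\ast}$ from the parameter annulus to the cylinder $\{0<\re w<\mathbf d\}/(w\sim w+iP)$, both of Euler characteristic $0$ — shows $\phi_3$ has no interior zeros. Hence $g\neq0,\infty$; together with $|g|>1$ this makes $N_3=(|g|^2-1)/(|g|^2+1)$ never equal to $\pm1$, so the tangent plane is never horizontal. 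Moreover $w=h(z)$ is now a global conformal coordinate, with $X_3=\re w$ ranging over the strip $S=\{0<\re w<\mathbf d\}$ on the universal cover.

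In the gauge $X_3=\re w$ one has $\phi_3=fg\equiv1$, so $f=1/g$, the induced metric is $\cosh^2u\,|dw|^2$ with $u:=\log|g|>0$, and the Weierstrass curvature formula collapses to
\[
\sqrt{|\mathcal{K}_\Sigma|}=\frac{4|g'|\,|g|}{(1+|g|^2)^2}=\frac{|\nabla u|}{\cosh^2u}.
\]
Here $u$ is a positive harmonic function on $S$, i.e. $G:=\log g$ maps $S$ holomorphically into the half-plane $\{\re\zeta>0\}$ with $\re G=u$ and $|G'|=|\nabla u|$. Comparing the hyperbolic density of $S$, namely $(\pi/\mathbf d)/\sin(\pi\re w/\mathbf d)$, with that of the half-plane, $1/\re\zeta$, the Schwarz--Pick lemma (this is the Schwarz-type estimate of Theorem~\ref{teo1}) gives $|\nabla u(w)|\le\big((\pi/\mathbf d)/\sin(\pi\re w/\mathbf d)\big)\,u(w)$. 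For $\zeta\in\Sigma\cap\Pi$ we have $\re w=\mathbf d/2$, so $|\nabla u|\le(\pi/\mathbf d)\,u$, and since $\cosh^2t\ge 1+t^2\ge t$ for $t\ge0$,
\[
\sqrt{|\mathcal{K}_\Sigma(\zeta)|}=\frac{|\nabla u|}{\cosh^2u}\le\frac{\pi}{\mathbf d}\cdot\frac{u}{\cosh^2u}\le\frac{\pi}{\mathbf d}.
\]
Squaring gives the claimed bound.

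The differential-geometric identity above is immediate once the $w$-gauge is in place; the substance is structural and is what Section~\ref{sec3} must provide: that the minimal annulus genuinely admits this graphical Weierstrass description with the vertical projection a harmonic diffeomorphism onto the base annulus, and — the step I expect to be the main obstacle — that the height has no interior critical point, which is exactly what makes $w$ a bona fide global coordinate and $u$ a globally defined positive harmonic function. The remaining delicate point is the sharp strip-to-half-plane Schwarz lemma responsible for the constant $\pi/\mathbf d$ at the equator.
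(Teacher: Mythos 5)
Your proof is correct, and it takes a genuinely different route from the paper's (which proves the statement in the equivalent form of Theorem~\ref{apo}). The paper stays on the annulus $\A(1/R,R)$: since the height is harmonic and constant on each boundary circle, it equals $h(z)=\frac{\mathbf d}{\log R^2}\log|z|$ (equation \eqref{v3h}), which yields the Hopf constant $\mathbf c=-\mathbf d^2/(4\Mod(\Sigma)^2)$ of \eqref{ccc}; the curvature is then written as $|\mathcal K|=|\omega'(z)|^2|z|^2/\bigl(|\mathbf c|(1+|\omega|)^4\bigr)$ for the holomorphic second dilatation $\omega=\mathbf q^2:\A(1/R,R)\to\ID$, and the Beardon--Minda Schwarz--Pick inequality \eqref{idb}, evaluated at the equator $|z|=1$ where the secant equals $1$, gives exactly $\pi^2/\mathbf d^2$. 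You instead pass to the height gauge $w=X_3+iX_3^{*}$ and apply Schwarz--Pick to $G=\log g$ from the width-$\mathbf d$ strip into the right half-plane (incidentally, this is not the paper's Theorem~\ref{teo1}, which concerns the dilatation near an interior zero; the paper's counterpart of your step is \eqref{idb}). Because your target is smaller --- you exploit $g\neq0,\infty$, i.e.\ in effect the hyperbolic metric of the punctured disk rather than of $\ID$ --- your method actually yields the strictly stronger equatorial bound $|\mathcal K|\le(\pi/\mathbf d)^2\bigl(\max_{t>0}t/\cosh^2t\bigr)^2<\pi^2/(4\mathbf d^2)$, of which the stated estimate is a coarse relaxation via $u/\cosh^2u\le1$; the paper's argument, which never uses $\omega\neq0$ in the estimate, cannot see this improvement.

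One correction of emphasis: the step you predict to be ``the main obstacle'' --- absence of interior critical points of $X_3$ --- is immediate, and needs neither the Hopf boundary-point lemma (which would in any case require boundary smoothness that mere Jordan curves $\gamma_1,\gamma_2$ do not supply) nor Riemann--Hurwitz: a bounded harmonic function on an annulus taking constant values on the two boundary circles equals $a+b\log|z|$ by the maximum principle, which is precisely \eqref{v3h}. Then $2\partial_z X_3=b/z$ never vanishes, $w$ is your global coordinate, $g\neq0,\infty$ follows, and your remaining computations ($f=1/g$, metric $\cosh^2u\,|dw|^2$ with $u=\log|g|>0$, $\sqrt{|\mathcal K|}=|\nabla u|/\cosh^2u$, and the midline estimate $|\nabla u|\le(\pi/\mathbf d)\,u$) are all accurate. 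Your treatment of the ``moreover'' clause (vertical excluded because the graph condition forces $|g|>1$ in the interior, horizontal because $g\neq0,\infty$) matches the paper's in substance, where the same facts are phrased via $\mathbf q\neq0$, which follows from $f_z\bar f_z=\mathbf c/z^2\neq0$, and the maximum principle for $|\mathbf q|$.
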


At the end at Subsection~\ref{subsug}, some explicit curvature estimates for rotationally symmetric minimal annuli in Riemannian manifolds $(\A(R,1), \wp)\times \mathbb{R}$, are obtained. Here $\wp$ is also rotationally symmetric and $\A(R,1):=\{z: R<|z|<1\}$.

\subsection{Conformal minimal immersion}\label{subsec22}
A mapping $\chi(z) = (f(z), h(z)): \Omega \to (M,\wp)\times \mathbb{R}$ is  conformal minimal if and only if $(h_w)^2=-(\wp\circ h)^2 f_w\bar f_w$. Furthermore, the induced metric is given by $$ds^2=(\wp\circ h)^2(|f_z|+|f_{\bar z}|)^2|dz|^2.$$ Moreover, the unit normal $N$ at the tangent plane of the surface $\Sigma=\{\chi(z): z\in\Omega\}$ is given by the formula \begin{equation}\label{nn}N=\frac{1}{1+|F|^2}\left(\frac{2}{\wp}\Re F, \frac{2}{\wp}\Im F, |F|^2-1\right)$$ where $$F^2=-\frac{f_w}{\bar f_w}.\end{equation} For those facts we refer to \cite{ili}. We want to mention that a class of minima surfaces on the hyperbolic space $\mathbb{H}\times \mathbb{R}$ has been considered in \cite{tohoku} and \cite{nelli}.

\subsection{Minimal graphs}
For a surface $S=\{(u,v, \omega(u,v)): (u,v)\in \Omega\}\subset M=(\Omega, \wp)\times \mathbb{R}$, the $\wp$-area is defined by
\begin{equation}\label{inervar}\|\omega\|:=\mathrm{Area}_\wp(S)=\int_{\Omega} \sqrt{\wp^2(u,v)+|\nabla \omega(u,v)|^2}dudv.\end{equation}

By setting $\left(\frac{d}{dt} \|\omega_t\|\right)_{t=0}=0$, where $\omega_t$ is the outer variation of the form $\omega_t=\omega+t \varpi$,   $t\in(-\epsilon,\epsilon)$, where $\varpi$ has a compact support in $\Omega$, the following non-linear elliptic partial differential equation comes out
 \begin{equation}\label{newmin}\mathrm{div}\left(\frac{\nabla \omega}{\sqrt{1+\frac{|\nabla \omega(u,v)|^2}{\wp^2(u,v)}}}\right)=0.\end{equation}
Therefore, the graph $$\Sigma=\{(u,v, \omega(u,v)): (u,v)\in\Omega\}\subset M\times\mathbb{R}$$ is called minimal if $\omega$ solves the
the equation \eqref{newmin}. For $\wp\equiv 1$, the equation is the standard minimal surface equation.

We will relate the minimal graphs and conformal minimal immersions in Proposition~\ref{lemag}.

\section{Preliminary results from harmonic mappings and related topics}

\subsection{Schwarz lemma for harmonic mappings}

Let $o\in \Omega$ and let $D_R(o)=D(o,R)\subset (\Omega,\wp)$  be a geodesic disk of radius $R>0$ centered at $o$  and assume that the cut-locus of $o$ is outside of $D_R(o)$ and $2 R \kappa<\pi$, where $\kappa^2$ is an upper bound of the  Gaussian curvature  and define
$$\mathcal{H}_\kappa(o,R)=\{f:\mathbb{D}\to \Omega: f(\mathbb{D})\subset D_R(o), f \text{ is } \wp-\text{ harmonic }\}.$$
One of main results in \cite{hsw}  of Hildebrandt, Jost and Widman for harmonic maps between two-dimensional manifolds reads.
\begin{proposition}\cite[Theorem~3]{hsw}\label{propo}
The class $\mathcal{H}_\kappa(o,R)$ is H\"older continuous on every compact subset of the unit disk, i.e. for every $s<1$ there are constants $C>0$ and $0<\alpha<1$ depending on $s$, $R$ and $\kappa$ such that \begin{equation}\label{hol}d_\wp(f(z),f(w))\le  C|z-w|^\alpha\end{equation} for $|z|,|w|<s$, $f\in \mathcal{H}_\kappa(o,R)$.
\end{proposition}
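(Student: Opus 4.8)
The plan is to prove the Hölder estimate \eqref{hol} by first establishing a uniform interior energy bound for the family $\mathcal{H}_\kappa(o,R)$ and then invoking the regularity theory for harmonic maps into a target with small geodesic ball. I would begin by fixing $s<1$ and observing that since $f(\mathbb{D})\subset D_R(o)$ with $2R\kappa<\pi$, the image lies in a geodesic ball that is strictly smaller than the ``conjugate radius'' threshold $\pi/(2\kappa)$; this is precisely the hypothesis under which the target ball is convex and supports a strictly convex function (the squared distance $d_\wp^2(\cdot,o)$), which is the analytic engine of the argument. The first concrete step is therefore to record that $u(z):=d_\wp^2(f(z),o)$ is a bounded subsolution of a suitable elliptic inequality, so that $u$ is controlled on $\mathbb{D}$ by its boundary bound $R^2$.

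Next I would derive the interior energy estimate. The standard device is to test the harmonic map equation \eqref{el} against a cutoff-weighted composition with the convex function on the target: choosing a smooth $\eta$ with $\eta\equiv 1$ on $|z|\le s$ and compactly supported in a slightly larger disk, one multiplies by $\eta^2$ and integrates, using the convexity (via the upper curvature bound $\kappa^2$ and $2R\kappa<\pi$) to absorb the zeroth-order terms and produce
\begin{equation}
\int_{|z|\le s}\bigl(|\partial f|^2+|\bar\partial f|^2\bigr)\,dx\,dy\le C(s,R,\kappa).
\end{equation}
Here the smallness condition on $R\kappa$ is exactly what guarantees the sign needed to close the estimate; without it the convex exhaustion function need not exist. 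This uniform energy bound holds for \emph{every} $f\in\mathcal{H}_\kappa(o,R)$ with the same constant.

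Given the interior energy bound, the Hölder continuity follows from the interior regularity theory of Hildebrandt--Jost--Widman for two-dimensional harmonic maps into a ball of radius $R<\pi/(2\kappa)$: bounded energy plus the geometric smallness condition yields a uniform modulus of continuity of the form $d_\wp(f(z),f(w))\le C|z-w|^\alpha$ with $\alpha$ and $C$ depending only on $s,R,\kappa$. I would present this as a quotation of their theorem applied on the concentric disk $|z|\le s'$ with $s<s'<1$, noting that the constants are uniform over the family because the energy bound and the curvature data are. The exponent $\alpha$ and constant $C$ are then exactly as claimed in the statement.

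The main obstacle, and the step deserving the most care, is verifying the interior energy bound uniformly over the whole class $\mathcal{H}_\kappa(o,R)$ rather than for a single map: one must ensure that the constant $C(s,R,\kappa)$ depends only on the stated data and not on the individual $f$. This reduces to checking that the convex exhaustion function on $D_R(o)$ and its derivative bounds depend only on $R$ and the curvature bound $\kappa^2$ (which they do, by comparison geometry, precisely because $2R\kappa<\pi$), and that the cutoff testing procedure uses only these quantities together with $\sup_\Omega|\mathcal{K}_\wp|<\infty$. Once uniformity of the energy bound is secured, the passage to a uniform Hölder estimate is the content of Proposition~\ref{propo} as cited, so no new difficulty arises there.
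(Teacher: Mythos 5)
First, a point of comparison with the paper: the paper offers no proof of this proposition at all --- it is imported verbatim from Hildebrandt--Jost--Widman \cite[Theorem~3]{hsw}, so the ``paper's own proof'' is the citation itself. Measured as a standalone argument, your proposal has a genuine circularity: after deriving the uniform interior energy bound you close by ``quoting their theorem'', and your final paragraph states explicitly that the passage from the energy bound to the uniform H\"older estimate ``is the content of Proposition~\ref{propo} as cited''. That passage is not routine bookkeeping --- it is the entire analytic content of the statement you are asked to prove, so invoking it makes the argument circular. It also renders your energy-bound stage superfluous: the Hildebrandt--Jost--Widman interior estimate for harmonic maps into a geodesic ball of radius $R<\pi/(2\kappa)$ disjoint from the cut locus does not take an energy bound as a hypothesis; the relevant a priori control is produced internally in their proof.

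What a non-circular argument would need, and what your sketch omits, is the mechanism by which convexity produces oscillation \emph{decay}. In the source this is done by composing $f$ with a family of convex functions on the small ball furnished by comparison geometry (for instance $1-\cos(\kappa\, d_\wp(\cdot,q))$ for moving centers $q$, available precisely because $2R\kappa<\pi$ and the cut locus is avoided), observing that these compositions are subsolutions of an elliptic equation, and then running a Moser-type iteration and Green's-function comparison on dyadic disks to obtain $\operatorname{osc}_{D(z_0,r)} f \le C r^\alpha$ with $C$ and $\alpha$ depending only on $s$, $R$, $\kappa$; uniformity over $\mathcal{H}_\kappa(o,R)$ is then automatic because no datum of an individual $f$ enters the constants. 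Your first two steps (subharmonicity-type control of $d_\wp^2(f(\cdot),o)$ and the Caccioppoli-type cutoff estimate) are correct and are indeed ingredients of such a proof, but without the decay iteration the conclusion \eqref{hol} does not follow: in two dimensions a uniform energy bound alone does not even imply continuity, let alone a uniform H\"older modulus. So either carry out the oscillation-decay estimate in full, or do what the paper does and present the proposition purely as a quotation of \cite[Theorem~3]{hsw}, without the intermediate scaffolding that suggests a proof is being given.
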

Further we use the following theorem of the author
\begin{theorem}\label{lemjun}\cite{complex4} Let $$\mathcal{H}_\kappa(o,R)=\{f:\mathbb{D}\to \Omega: f(\mathbb{D})\subset D_R(o),\,2R\kappa<\pi, f \text{ is } \wp-\text{ harmonic }\}.$$
a) Then there is an absolutely continuous homeomorphism $$\phi_\wp=\phi_{o,R}:[0,1]\to[0,R]$$ convex in $\log r$, with $\phi'(1^-)>0$ and $0<\phi'(0^+)<\infty$, such that
$$d_\wp(f(z),f(0))\le\phi_\wp\left(\left|z\right|\right),$$ for $f\in \mathcal{H}_\kappa(o,R)$ with $f(0)=o$. Then $\ID\ni z\to \phi_\wp(|z|)$ is subharmonic.

b) Assume that the cut-locus of $o$ is outside of $D(o,R)$. Then there is an absolutely continuous homeomorphism $$\varphi_\wp=\varphi_{R}:[0,1]\to[0,2R]$$ convex in $\log r$, with $\varphi_\wp '(1^-)>0$ and $0<C'_\wp=\varphi_\wp'(0^+)<\infty$, such that
$$d_\wp(f(z),f(0))\le\varphi_\wp\left(\left|z\right|\right),$$ for $f\in \mathcal{H}_\kappa(o,R)$.  Moreover $$\wp(f(z))|\nabla f(z)|\le \frac{C'_\wp}{1-|z|^2}, \ z\in\ID.$$ Here $|\nabla f(z)|:=|f_z(z)|+|f_{\bar z}(z)|$.

\end{theorem}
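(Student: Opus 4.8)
The plan is to reduce both statements to a single fact: for $f\in\mathcal{H}_\kappa(o,R)$ the $\wp$-distance from the image to the center is a subharmonic function on $\ID$, after which a maximum-principle argument produces the radial majorant. Fix $f$ with $f(0)=o$, write $\rho=d_\wp(\cdot,o)$ and $u=\rho\circ f$. Since the domain is the flat disk and $f$ solves the $\wp$-harmonic equation \eqref{el} (so its tension field vanishes), the composition formula for harmonic maps (a form of the Bochner identity) gives $\Delta u=\mathrm{Hess}\,\rho(f_x,f_x)+\mathrm{Hess}\,\rho(f_y,f_y)$. The hypothesis $\mathcal{K}_\wp\le\kappa^2$ together with $\rho\le R<\pi/(2\kappa)$, which keeps us strictly inside the first conjugate radius, lets me invoke the Hessian comparison theorem in the form $\mathrm{Hess}\,\rho\ge\kappa\cot(\kappa\rho)\,(\wp^2|dw|^2-d\rho\otimes d\rho)$, valid in the support sense even across a possible cut locus. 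Because $|\nabla u|\le|df|$ (Cauchy--Schwarz, using $|d\rho|_\wp=1$) and $\cot(\kappa\rho)>0$ on the whole range, this forces $\Delta u\ge0$; the isolated singularity of $\rho$ at $o$ is removable, so $u$ is subharmonic on $\ID$. I expect this step to be clean, the only care being the support-sense Hessian estimate at the cut locus and at $o$.

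Next I would build the majorant. Define $\phi_\wp(r)=\sup\{\,d_\wp(f(z),o):f\in\mathcal{H}_\kappa(o,R),\ f(0)=o,\ |z|=r\,\}$; precomposition with rotations shows this depends only on $r$. For each fixed $f$ the maximum of the subharmonic $u$ over the circle $|z|=r$ is, by comparison with the harmonic functions $a+b\log|z|$, non-decreasing and convex in $\log r$ (the subharmonic analogue of Hadamard's three-circles theorem); taking the supremum over the family preserves both properties, so $\phi_\wp$ is convex and increasing in $\log r$. Consequently $z\mapsto\phi_\wp(|z|)$ is a convex increasing function of the harmonic $\log|z|$ and hence subharmonic, which is the last assertion of part a). To turn the supremum into an attained, strictly increasing homeomorphism onto $[0,R]$ I would use the Hölder estimate of Proposition~\ref{propo}: it makes $\mathcal{H}_\kappa(o,R)$ equicontinuous on compacta, so by Arzel\`a--Ascoli (harmonicity passing to locally uniform limits) the supremum is a maximum and $\phi_\wp$ is continuous with $\phi_\wp(0)=0$, while the strong maximum principle rules out flat pieces and yields strict monotonicity.

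The endpoint conditions $0<\phi'(0^+)<\infty$ and $\phi'(1^-)>0$ I would read off from the extremal map: the first-order expansion $d_\wp(f(z),o)=\wp(o)\,|\nabla f(0)|\,|z|+o(|z|)$ and the uniform interior gradient bound (a standard upgrade of Proposition~\ref{propo}) give $\phi_\wp(r)=C'_\wp r+o(r)$ near $0$ with $C'_\wp$ finite, while non-triviality of the extremal forces positivity at both ends. For part b) I repeat the construction without normalizing $f(0)$, taking $\varphi_\wp(r)=\sup\{d_\wp(f(z),f(0)):f\in\mathcal{H}_\kappa(o,R),\ |z|=r\}$, whose range is $[0,2R]$ since $d_\wp(f(z),f(0))\le 2R$. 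The gradient estimate is then the Schwarz--Pick differentiation trick: for $a\in\ID$ precompose $f$ with the Möbius automorphism $M_a$ of $\ID$ carrying $0$ to $a$; the composite stays in $\mathcal{H}_\kappa(o,R)$, so $d_\wp\big(f(M_a(z)),f(a)\big)\le\varphi_\wp(|z|)$, and dividing by $|z|$, letting $z\to0$ and using $|M_a'(0)|=1-|a|^2$ yields $\wp(f(a))\,|\nabla f(a)|\,(1-|a|^2)\le\varphi_\wp'(0^+)=C'_\wp$.

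The delicate point is part b). Since only $2R\kappa<\pi$ is assumed, the moving distance $d_\wp(f(z),f(0))$ can exceed $\pi/(2\kappa)$, where $\cot(\kappa\rho)$ changes sign and the subharmonicity argument above breaks down (no monotone convex function of $\rho$ stays convex past $\pi/(2\kappa)$). Overcoming this is exactly where the hypothesis that the cut locus of $o$ lies outside $D(o,R)$ must enter, to keep the comparison geometry and a usable barrier centred at $f(0)$ under control; this, together with pinning down the endpoint derivatives of the extremal majorant so that $C'_\wp$ is genuinely finite and positive, is where I expect the real work to lie.
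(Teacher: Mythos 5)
You should know at the outset that the paper does not prove Theorem~\ref{lemjun} at all: it is imported verbatim from \cite{complex4}, and the only pieces of its proof machinery reproduced nearby are the H\"older-continuity computation for the sup-envelope (used for $\phi_\kappa$) and the derivation of \eqref{cpk} from \eqref{scwl2} by dividing by $|z|$ and letting $z\to 0$. Measured against that, your part a) is essentially the expected reconstruction and is sound in outline: subharmonicity of $d_\wp(f(\cdot),o)$ via the composition formula and Hessian comparison (legitimate here because the image stays in the regular range $\rho\le R<\pi/(2\kappa)$, and the support-sense comparison handles a possible cut locus; the removability issue at $f^{-1}(o)$ is not about isolated points, since $f^{-1}(o)$ can be a curve, but is harmless because $u\ge 0$ makes the sub-mean-value inequality trivial where $u=0$), three-circles convexity of the envelope, equicontinuity from Proposition~\ref{propo}, nondegeneracy from geodesic-composed-with-harmonic-function examples, and the M\"obius precomposition trick for the gradient bound --- which is exactly the mechanism behind \eqref{cpk}, and is available in b) precisely because that class carries no normalization $f(0)=o$.

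The genuine gap is that part b) is not proven. Defining $\varphi_\wp$ as a supremum makes the inequality $d_\wp(f(z),f(0))\le\varphi_\wp(|z|)$ a tautology; the entire content of b) lies in the stated properties of $\varphi_\wp$ --- convexity in $\log r$, absolute continuity, and above all $0<C'_\wp=\varphi_\wp'(0^+)<\infty$ and $\varphi_\wp'(1^-)>0$ --- and none of these is established, because, as you yourself concede, $d_\wp(f(z),f(0))$ can exceed $\pi/(2\kappa)$, where $\cot(\kappa\rho)$ changes sign and the subharmonicity mechanism of a) fails; you point at the cut-locus hypothesis as the place where this must be repaired but supply no repair. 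Since your proof of the headline estimate $\wp(f(z))|\nabla f(z)|\le C'_\wp/(1-|z|^2)$ is deduced from $\varphi_\wp'(0^+)=C'_\wp$, all of b) remains conditional. (Note, incidentally, that monotonicity of $\varphi_\wp$ \emph{is} free: M\"obius invariance of the class identifies $\varphi_\wp(r)$ with a supremum over pairs at hyperbolic distance at most $d_{\mathrm{hyp}}(0,r)$, which is increasing in $r$; but convexity and the endpoint derivatives are not free.) A secondary soft spot, already present in a): the existence of $\phi_\wp'(0^+)$ does not follow from convexity in $\log r$ together with two-sided linear bounds $cr\le\phi_\wp(r)\le Cr$ --- convexity in $\log r$ only makes $r\phi_\wp'(r)$ monotone, which is compatible with $\phi_\wp(r)/r$ oscillating as $r\to 0$ --- and your appeal to ``the extremal map'' is circular, since the near-extremal map depends on $r$; this is exactly the point the paper itself defers to \cite{complex4} rather than to a Taylor expansion. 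For \eqref{cpk} alone one can salvage a finite constant by using $\limsup_{r\to 0}\varphi_\wp(r)/r$ in place of $\varphi_\wp'(0^+)$, but that does not prove the theorem as stated; you also leave surjectivity of $\phi_\wp$ onto $[0,R]$ and of $\varphi_\wp$ onto $[0,2R]$ unverified.
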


Let $\mathfrak{M}_{\kappa}$ be the family of all smooth metrics $\wp$ in $\Omega$ so that $|\mathcal{K}_{\wp}|\le \kappa^2$, and so that the geodesic disk $D_1(o)=D_\wp(o,1)\subset \Omega$ is disjoint from its cut-locus and $2\kappa<\pi$. Let $\mathcal{H}_\kappa(o):=\{f:\mathbb{D}\to \Omega: f(\mathbb{D})\subset D_1(o),\,2\kappa<\pi, f \text{ is } \wp-\text{ harmonic for some } \wp\in \mathfrak{M}_{\kappa}  \text{ and } f(0)=o\}.$
Define \begin{equation}\label{pkappa}\phi_\kappa(r) =\sup\{\phi_\wp(r), \wp\in \mathfrak{M}_{\kappa} \}.\end{equation}

The continuity of $\phi_\kappa$ follows from Theorem~\ref{lemjun}, because the constant $C$ in \eqref{hol} depends on $\kappa$ only. To do so, observe that $$\phi_\wp(r_j)=\sup_{|z|=r_j, f\in H_\wp} d_\wp(f(z),f(0)),\  j=1,2,$$ and  assume without loss of generality that $\phi_\wp(r_2)\ge \phi_\wp(r_1)$. Then

\[\begin{split}|\phi_\wp(r_2)-\phi_\wp(r_1)|&=(\phi_\wp(r_2)-\phi_\wp(r_1))\\&=\sup_f d_\wp(f(r_1), o)-\sup_g d_\wp(g(r_2), o)\\&\le \sup_f\left[ d_\wp(f(r_1), o)- d_\wp(f(r_2),o)\right]\le \sup_f d_\wp(f(r_1), f(r_2))\\&\le C|r_1-r_2|^\alpha.\end{split}\]
Similarly we get $$|\phi_\kappa(r_2)-\phi_\kappa(r_1)|\le  C|r_1-r_2|^\alpha.$$
Further $\phi_\kappa$ is not constant, because $\phi_\kappa(0)=0$.

Then by well-known result for subharmonic functions, $\mathbb{D} \ni z\to \phi_\kappa(|z|)$ is a subharmonic absolutely continuous function on $r=|z|$ and   $\phi_\kappa(r)=\phi_{o}(r):[0,1]\to[0,1]$ is convex in $\log r$, with \begin{equation}\label{phi1}\phi_\kappa'(1^-)>0\end{equation} and
\begin{equation}\label{phi0}0<\phi_\kappa'(0^+)<\infty,\end{equation} such that
$$d_\wp(f(z),f(0))\le\phi_\kappa\left(\left|z\right|\right),$$ for $f\in \mathcal{H}(o)$ with $o=f(0)$ and $\wp\in \mathfrak{M}_{\kappa}$.

Notice that $\phi_\kappa$ is a nonconstant function. Since $\phi_\kappa $ is an
increasing convex function of $\log r$, it follows that for $r>s$
$$r\phi_\kappa'(r^+)\ge r\phi'(r^-)\ge s\phi_\kappa'(s^+)\ge s\phi_\kappa'(s^-)\ge 0.$$ Since it is non-constant, it satisfies in
particular that  $\phi'(1^-)>0$ which proves \eqref{phi1}. The relation \eqref{phi0} can be proved in the same way as the analogous relation for $\phi_\wp$ proved in \cite{complex4}.

Moreover, from Schwarz lemma for harmonic Euclidean harmonic functions (\cite[p.~76]{Duren2004}) we have $\phi_k(r)\ge \phi_0(r)=\frac{4}{\pi}\arctan r$. Since $\phi_k(1)=1$, we get $$\frac{1-\phi_k(r)}{1-r}\le \frac{1-\frac{4}{\pi}\arctan r}{1-r}.$$ By letting $r\to 1$ we get
\begin{equation}\label{derivat1}
\phi_\kappa'(1^-)\le \frac{2}{\pi}.
\end{equation}

So we have proved the first part of the following Schwarz lemma type estimate and Heinz type estimate for harmonic mappings.
\begin{theorem}\label{teo2}
Let $w=f(z)$ be a harmonic mapping of the unit disk  $\mathbb{D}$ into the geodesic disk $D(o,1)\subset
(\mathcal{M},\wp)$ of the Reimannian surface $(\mathcal{M},\wp)$ with
a Gaussian curvature $-\kappa^2\le \mathcal{K}_\wp\le \kappa^2$ and assume that $2\kappa<\pi$ and that the cut-locus of $o$ is outside of $D(o,1)$.  Let $\varrho$ be the distance function from the
fixed point $o\in \mathcal{M}$. Define $\rho(z)=\varrho(f(z))$.
Assume also that $o=f(0)$.

a) Then  we have the Schwarz lemma type inequality \begin{equation}\label{scwl}|\rho(z)|\le \phi_\kappa(|z|),\end{equation} where $\phi_\kappa:[0,1]\onto[0,1]$ is a bi-Lipschitz homeomorphism with $\phi_\kappa'(1^-)<2/\pi$.
Moreover, $$|\nabla \rho(0)|\le C_\kappa,$$ where $C_\kappa=\phi_\kappa'(0)\ge \frac{4}{\pi}$ and $\phi_\kappa$  depend on $\kappa$ only.

$a'$) If the cut-locus of $o$ is outside of $D(o,1)$ we have the inequality \begin{equation}\label{scwl2}d_\wp(f(z),f(0))|\le \varphi_\kappa(|z|), \ z\in\ID \end{equation} where $\varphi_\kappa:[0,1]\onto[0,2]$ is a homeomorphism locally Lipschitz in $[0,1)$.
Moreover, \begin{equation}\label{cpk}\wp(f(z))|\nabla f(z)|\le \frac{C'_\kappa}{1-|z|^2}, \ z\in\ID\end{equation} where $C'_\kappa=\varphi_\kappa'(0)<\infty$ and $\varphi_\kappa$  depend on $\kappa$ only.
%Further $$|\nabla \rho(z)|\le \frac{C_\kappa}{1-|z|^2},$$

b) If in addition the function $f:\mathbb{D}\onto D(o,1)$ is a diffeomorphism with $f(0)=o$, and $\wp$ has a non-negative  Gaussian curvature,  then \begin{equation}\label{maybe}|\partial f(z) |\ge c_\kappa>0, \ \ z\in \mathbb{D}.\end{equation} The function $\kappa\to c_\kappa$ is non-increasing.

\end{theorem}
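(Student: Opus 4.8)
The plan is to prove the three parts of Theorem~\ref{teo2} in turn, the first of which is already established in the text preceding the statement (the construction of $\phi_\kappa$ as a supremum over the admissible family, its subharmonicity, convexity in $\log r$, and the bound $\phi_\kappa'(1^-)\le 2/\pi$ via comparison with the Euclidean $\tfrac{4}{\pi}\arctan r$). So part (a) reduces to noting that $\rho(z)=\varrho(f(z))=d_\wp(f(z),o)=d_\wp(f(z),f(0))$ is controlled by $\phi_\kappa(|z|)$ directly from the definition \eqref{pkappa}, and that $\phi_\kappa$ is a bi-Lipschitz homeomorphism onto $[0,1]$ since $0<\phi_\kappa'(0^+)$ and $\phi_\kappa'(1^-)>0$ with both finite. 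For the bound $|\nabla\rho(0)|\le C_\kappa$ I would use that $\rho(z)\le\phi_\kappa(|z|)$ with equality forced at $z=0$ (both vanish there), so the radial derivative of $\rho$ at $0$ is dominated by $\phi_\kappa'(0^+)=:C_\kappa$; the lower bound $C_\kappa\ge 4/\pi$ follows from $\phi_\kappa\ge\phi_0=\tfrac{4}{\pi}\arctan r$ and differentiating at $0$.

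For part $(a')$ I would invoke Theorem~\ref{lemjun}(b): under the cut-locus hypothesis there is a homeomorphism $\varphi_\wp:[0,1]\to[0,2R]$ (here $R=1$) convex in $\log r$ with the gradient estimate $\wp(f(z))|\nabla f(z)|\le C'_\wp/(1-|z|^2)$. Taking the supremum over $\wp\in\mathfrak{M}_\kappa$ exactly as was done for $\phi_\kappa$, I would define $\varphi_\kappa(r)=\sup_\wp\varphi_\wp(r)$ and transfer the convexity, the surjectivity onto $[0,2]$, and the local Lipschitz property to $\varphi_\kappa$; the uniform gradient bound \eqref{cpk} then follows because $C'_\wp=\varphi_\wp'(0^+)\le\varphi_\kappa'(0^+)=:C'_\kappa$, finite by the same compactness/Hölder argument used for \eqref{phi0}. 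The point is that all these constants depend only on $\kappa$, which is what makes the supremum well-behaved.

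Part (b) is the genuinely new and hardest assertion: a uniform lower bound $|\partial f(z)|\ge c_\kappa>0$ for diffeomorphisms $f:\mathbb{D}\onto D(o,1)$ onto the full geodesic disk when $\mathcal{K}_\wp\ge 0$. The natural tool is the Bochner formula quoted in the introduction, $\Delta\log|\partial f|^2=-\mathcal{K}_\wp J(f)$, which under $\mathcal{K}_\wp\ge 0$ and $J(f)>0$ (diffeomorphism, sense-preserving) gives $\Delta\log|\partial f|^2\le 0$, so $\log|\partial f|^2$ is superharmonic. A superharmonic function attains its minimum on the boundary, so I would seek a boundary lower bound for $|\partial f|$ and propagate it inward via the minimum principle. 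The obstacle is that $|\partial f|$ need not extend continuously to $\partial\mathbb{D}$, so one cannot naively evaluate on the boundary circle; the remedy is a normal-family/compactness argument over $\mathcal{H}_\kappa(o)$ together with the fact that $f$ is a diffeomorphism onto the whole disk, which forces a definite amount of ``spreading'' near the boundary and hence a curvature-only lower bound on the energy density there.

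The main difficulty I anticipate is making this last step quantitative and uniform in $\wp$. Concretely, I expect to combine the superharmonicity of $\log|\partial f|^2$ with the upper Schwarz bound $d_\wp(f(z),o)\le\phi_\kappa(|z|)$ from part (a) and the surjectivity of $f$: since $f$ maps $\mathbb{D}$ \emph{onto} $D(o,1)$ and the boundary values cover the boundary geodesic circle, a degree/monotonicity argument gives that $|\partial f|$ cannot degenerate uniformly, and the minimum principle then yields the interior bound $c_\kappa=\inf_{\mathbb{D}}|\partial f|>0$ with $c_\kappa$ depending only on $\kappa$ by the compactness of $\mathfrak{M}_\kappa$. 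Finally, the monotonicity claim ``$\kappa\mapsto c_\kappa$ is non-increasing'' should follow from the inclusion $\mathfrak{M}_{\kappa_1}\subset\mathfrak{M}_{\kappa_2}$ for $\kappa_1\le\kappa_2$ (a larger curvature bound admits more metrics), so the infimum defining $c_\kappa$ is taken over a larger family and can only decrease.
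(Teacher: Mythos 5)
Your parts (a) and (a$'$) are fine and coincide with the paper: (a) is indeed established in the discussion preceding the statement, and (a$'$) is obtained exactly as you say, by taking $\varphi_\kappa(r)=\sup_\wp d_\wp(f(z),f(0))$ over $\mathfrak{M}_\kappa$ and deriving \eqref{cpk} from Theorem~\ref{lemjun}(b). The genuine gap is in part (b), at precisely the step you yourself flag as ``the main difficulty'': you have the right skeleton (Bochner formula makes $\log|\partial f|$ superharmonic after sign reversal, minimum principle propagates a boundary bound inward), but the uniform boundary lower bound is left as an expectation, and the tools you invoke for it would not produce it. The paper's argument here is elementary and explicit: since $f$ is onto, $\varrho(f(e^{it}))=1$, so the triangle inequality together with part (a) gives
\begin{equation*}
\frac{d_\wp(f(z),f(e^{it}))}{|z-e^{it}|}\ \ge\ \frac{\varrho(f(e^{it}))-\varrho(f(z))}{|z-e^{it}|}\ \ge\ \frac{1-\phi_\kappa(|z|)}{1-|z|}\ \ge\ 2\tilde c_\kappa,
\qquad
\tilde c_\kappa=\frac12\inf_{0\le r<1}\frac{1-\phi_\kappa(r)}{1-r}>0,
\end{equation*}
where the positivity of $\tilde c_\kappa$ is exactly what the earlier work establishing $\phi_\kappa'(1^-)>0$ was for; letting $z\to e^{it}$ radially yields $\wp(f(e^{it}))\,|\partial_r f|\ge 2\tilde c_\kappa$, and sense-preservation ($|f_{\bar z}|\le |f_z|$, so $|f_z|\ge\frac12(|f_z|+|f_{\bar z}|)\ge\frac12|\partial_r f|$) converts this into $\wp(f(e^{it}))|f_z(e^{it})|\ge\tilde c_\kappa$. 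Your proposed substitutes fail on two counts: a ``degree/monotonicity argument'' showing $|\partial f|$ ``cannot degenerate uniformly'' is qualitatively weaker than the pointwise boundary bound the minimum principle needs, and ``compactness of $\mathfrak{M}_\kappa$'' is neither proved in the paper nor obviously true for a family of metrics constrained only by a two-sided curvature bound --- nor is it needed, since uniformity in $\wp$ comes for free from $\phi_\kappa$ being defined as a supremum over $\mathfrak{M}_\kappa$.

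A second, smaller gap concerns the boundary-regularity obstacle, which you correctly identify but do not resolve: a generic normal-family argument over $\mathcal{H}_\kappa(o)$ does not produce approximants that are smooth up to $\partial\mathbb{D}$, still harmonic, still \emph{onto} a full geodesic disk, and subject to the same curvature bound --- all four properties are needed to apply the boundary estimate. The paper's remedy is the explicit conformal exhaustion $f_n=\Psi_n^{-1}\circ f\circ\Phi_n$, with $\Phi_n:\mathbb{D}\onto f^{-1}(D(o,n/(n+1)))$ and $\Psi_n^{-1}:D(o,n/(n+1))\onto D(o,1)$ conformal; then $f_n$ is $\wp_n$-harmonic for $\wp_n=(\wp\circ\Psi_n)|\Psi_n'|$, and the key observation $\mathcal{K}_{\wp_n}=\mathcal{K}_\wp\circ\Psi_n$ shows the curvature hypothesis is inherited, so $|\partial f_n|\ge\tilde c_\kappa$ holds uniformly and passes to the limit as $\Phi_n,\Psi_n\to\mathrm{id}$ on compacts. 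Your final monotonicity argument (the defining family grows with $\kappa$, so the infimum $c_\kappa$ decreases) does match the paper and is correct.
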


\begin{remark}\label{vere}
a)
Instead  of $D(o,1)\subset(\mathcal{M},\wp)$, we can assume that $D(o,R)\subset(\mathcal{M},\wp)$, with $2\kappa R<\pi$. In that case we define $\wp_R(z) = R\cdot\wp(z)$. Then $D_{\wp}(o, R)=D_{\wp_R}(o,1)$ and \begin{equation}\label{cuca}\mathcal{K}_{\wp_{R}}=\frac{1}{R^2} \mathcal{K}_\wp<\frac{\pi^2}{\kappa^2}.\end{equation} In this case we get from \eqref{maybe} that \begin{equation}\label{maybe1}|\partial f(z) |\ge R c_\kappa, \ \ z\in \mathbb{D}.\end{equation}

A similar remark can be stated for $a')$.

If $\phi^R_\kappa(r)= R \phi_\kappa(r)$ is the corresponding homeomorphism between $[0,1]$ and $[0,R]$, then the inequality $\rho(z)\le \phi^R_\kappa(|z|)$ is sharp, provided that $f$ is a $\wp-$harmonic mapping of the unit disk onto $D_{\wp}(o, R)$ satisfying $f(0)=o$ under the constraint \eqref{cuca}.

b) Inequality \eqref{maybe} for $z=0$, with some absolute constant $c$ instead of $c_\kappa$ has been proved by Yau and Schoen in \cite{SchoenYau1997} for complete surfaces with positive curvature. We also want to observe that for $\kappa=0$, i.e. for flat metrics, we have $\phi_0(r)= \frac{4}{\pi}\arctan r$. Moreover, $C_0=4/\pi$. This follows from Heinz classical result \cite{he}. We want also to point out that the assumption $\kappa<\pi/2$ is essential. Namely it follows from the example in \cite{complex4}, that $C_{\kappa}\to\infty$ as $\kappa\to\pi/2$.

c) It follows from  Wan result \cite[Theorem~13]{tam} the following Heinz type sharp inequality $|\partial f(z)|\ge 1, z\in\ID$ (cf. \eqref{maybe} above) for every hyperbolic harmonic diffeomorphism of the unit disk onto itself. Observe that $\mathcal{K}_\lambda=-1$, for the hyperbolic metric $\lambda$.
\end{remark}

\begin{proof}[Proof of Theorem~\ref{teo2}]
{The  part a) of the theorem is already proved before its formulation.  The part $a'$) can be proved by imitating the proof of $a)$ by setting $\varphi_\kappa(r) =\sup\{d_\wp(f(z), f(0)): |z|=r, \wp\in\mathfrak{M}_\kappa, f:\mathbb{D}\to \Omega: f(\mathbb{D})\subset D_1(o),\,2\kappa<\pi, f \text{ is } \wp-\text{ harmonic } \}.$} Moreover \eqref{cpk} follows from \eqref{scwl2} by dividing it by $|z|$ and letting $z\to 0$.

To prove $b)$, assume first that $f$ has a smooth extension up to the boundary.
 We have by the triangle inequality that $$\frac{d_\wp(f(z),f(1))}{|z-1|}\ge \frac{\varrho(f(1))-\varrho(f(z))}{|z-1|}\ge \frac{1-\phi_\kappa(|z|)}{1-|z|}.$$ Then we obtain that $$\wp(f(e^{it}))|\partial_r f(re^{it})|_{r=1}\ge  2\tilde c_\kappa>0,$$
where  \begin{equation}\tilde c_\kappa=\frac{1}{2}\inf_{r\in[0,1)}\frac{1-\phi_\kappa(r)}{1-r}>0,\end{equation} because $$\lim_{r\to 1-0}\frac{1-\phi_\kappa(r)}{1-r}=\phi_\kappa'(1^-)>0.$$
Therefore \[\begin{split}\wp(f(e^{it}))|\partial_z f(e^{it})|&\ge \frac{1}{2}\wp(f(e^{it}))(|\partial_z f(e^{it})|+|\partial_{\bar z} f(e^{it})|)\\&\ge \frac{1}{2} \wp(f(e^{it}))|\partial_r f(re^{it})|_{r=1}\ge \tilde c_\kappa.\end{split}\]
Further since  $\mathcal{K}_\wp\ge 0$ and $J(z,f)=|\partial f(z)|^2-|\bar \partial f(z)|^2>0$, it follows that $\partial f(z)\neq 0$.  Then by Bochner's formula we have
$$\Delta \log \frac{1}{|\partial f|}=\mathcal{K}_\wp J_f\ge 0,$$ where $$|\partial f(z)|=\wp(f(z))|f_z(z)|.$$  From the maximum principle for subharmonic functions we have
$$\max_{|z|\le 1}\log \frac{1}{|\partial f|} =\max_{|z|= 1}\log \frac{1}{|\partial f|} .$$
So
$$\min_{|z|\le 1}{|\partial f|} =\min_{|z|= 1}{|\partial f|} \ge \tilde c_\kappa.$$
If $f$ has not a smooth extension up to the boundary then we use an approximation argument. We define the sequence $f_n(z) = \Psi^{-1}_n(f(\Phi_n(z)))$, where $\Phi_n: \mathbb{D}\onto f^{-1}(D(o, n/(n+1)))$, and $\Psi^{-1}_n: D(o, n/(n+1))\onto D(o,1)$ are conformal mappings so that
$\Phi_n(0)=0$, $\Psi_n(o)=o$, $\Phi_n'(0)>0$ and $\Psi_n'(o)>0$.  Then $f_n:\mathbb{D}\onto D(o, 1)$ is $\wp_n-$harmonic. Here $\wp_n(w)=\wp(\Psi_n(w))|\Psi_n'(w)|$, and $$\mathcal{K}_{\wp_n}(w)=\mathcal{K}_{\wp}(\Psi(w)),$$ so $0\le \mathcal{K}_{\wp_n}\le \kappa^2$ if  $0\le \mathcal{K}_{\wp}\le \kappa^2.$ Thus $$|\partial f_n(z)|\ge \tilde c_\kappa, \ z\in\mathbb{D}$$ and therefore $$|\partial f(z)|\ge \tilde c_\kappa, \ z\in\mathbb{D}$$  because $\Psi_n$ and $\Phi_n$ converges to identity in compacts. Now we define \begin{equation}\label{ck}c_\kappa=\inf\{|\partial f(z)|, z\in\ID, f\in \mathcal{H}_\kappa(o)\}.\end{equation} Then $c_\kappa\ge \tilde c_\kappa>0$.
Moreover, in view of \eqref{ck}, because $\mathcal{H}_{\kappa_1}(o)\subset \mathcal{H}_{\kappa_2}(o)$, for $\kappa_1<\kappa_2$, we obtain that $\kappa_1<\kappa_2\Rightarrow c_{\kappa_2}\le c_{\kappa_1}$.
\end{proof}
\subsection{Schwarz lemma for dilatation of harmonic mappings}
\begin{proposition}\label{prop2} \cite[ p.~10-11]{SchoenYau1997} Assume that $h$ is harmonic with respect to a non-vanishing $C^{2}$ metric $\rho$. Then the functions $|\partial h|$ and $|\bar \partial h|$ are identically zero or they have the isolated zeros with well defined orders. More precisely $|\partial h|=|z-z_0|^n u(z)$  and $|\bar\partial h|=|z-z_0|^m v(z)$ for continuous functions $u$ and  $v$ such that $u(z_0)\neq 0$ and $v(z_0)\neq 0$.
\end{proposition}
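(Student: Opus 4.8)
The plan is to recognize $\partial h$ and $\bar\partial h$ as \emph{pseudoholomorphic} (generalized analytic) functions and to apply the Bers--Vekua similarity principle, which represents any such function as a holomorphic function times a nowhere-vanishing continuous factor; this yields at once the isolated-zero structure and the integer order of vanishing.

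First I would set $\Phi:=h_z$ and read off from the harmonic map equation \eqref{el} that
\[ \Phi_{\bar z}=h_{z\bar z}=-\big[(\log\wp^2)_w\circ h\big]\,h_{\bar z}\,\Phi=:A\,\Phi. \]
Since the metric is a non-vanishing $C^2$ function and $h\in C^2$, the coefficient $A=-\big[(\log\wp^2)_w\circ h\big]\,h_{\bar z}$ is continuous, hence locally bounded and in particular in $L^p_{\mathrm{loc}}$ for every $p>2$. Thus $\Phi$ solves a linear elliptic Vekua-type equation $\Phi_{\bar z}=A\Phi$ with an admissible coefficient.

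Next I would invoke the similarity principle: on a small disk $U\ni z_0$ one solves $\omega_{\bar z}=A$ by the Cauchy transform $\omega(z)=\tfrac{1}{\pi}\int_U \tfrac{A(\zeta)}{z-\zeta}\,dm(\zeta)$, which is H\"older continuous (hence bounded) on $U$; then $G:=\Phi\,e^{-\omega}$ satisfies $G_{\bar z}=0$, i.e. $G$ is holomorphic, and $\Phi=e^{\omega}G$. If $G\equiv0$ then $\Phi\equiv0$ and $|\partial h|\equiv0$; otherwise $G$ has only isolated zeros, and near each zero $z_0$ we write $G(z)=(z-z_0)^{n}\widetilde G(z)$ with $\widetilde G$ holomorphic and $\widetilde G(z_0)\neq0$. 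Consequently
\[ |\partial h|=\wp(h)\,|\Phi|=|z-z_0|^{n}\,\underbrace{\wp(h)\,|\widetilde G|\,e^{\re\omega}}_{=:\,u(z)}, \]
where $u$ is continuous and $u(z_0)\neq0$, because $\wp>0$, $\widetilde G(z_0)\neq0$ and $e^{\re\omega}>0$. This establishes the claim for $|\partial h|$ with the well-defined non-negative integer order $n$.

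For $|\bar\partial h|$ the argument is symmetric. With $\Psi:=h_{\bar z}$, equation \eqref{el} gives $\Psi_z=h_{z\bar z}=-\big[(\log\wp^2)_w\circ h\big]\,h_z\,\Psi=:B\Psi$, so that $\overline\Psi$ satisfies $(\overline\Psi)_{\bar z}=\bar B\,\overline\Psi$, again a Vekua equation with continuous coefficient. Applying the similarity principle to $\overline\Psi$ and using $|\Psi|=|\overline\Psi|$ together with $|\bar\partial h|=\wp(h)\,|\Psi|$ gives $|\bar\partial h|=|z-z_0|^{m}v(z)$ with $v$ continuous, $v(z_0)\neq0$ (or $|\bar\partial h|\equiv0$). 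The only genuinely delicate point is the similarity principle itself, namely that the exponential factor $e^{\omega}$ is continuous; this rests on the regularity of the Cauchy transform of an $L^p$ ($p>2$) function and is exactly where the $C^2$-regularity of the metric is used (to make $A$ bounded). The remaining steps --- transferring the factorization from $h_z$ to the weighted density $\wp(h)\,|h_z|$, and the symmetric treatment of $\bar\partial h$ --- are routine, since multiplication by the positive continuous factor $\wp(h)$ alters neither the location nor the order of the zeros.
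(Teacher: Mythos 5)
Your argument is correct. Bear in mind, though, that the paper offers no proof of this proposition at all: it is quoted directly from Schoen--Yau \cite{SchoenYau1997}, so the only meaningful comparison is with the cited source, and your similarity-principle argument is essentially the standard proof that underlies that citation. Two remarks are worth making. First, your situation is simpler than the general Bers--Vekua setting: the equation $\Phi_{\bar z}=A\Phi$ contains no $\overline{\Phi}$ term, so the factorization $\Phi=e^{\omega}G$ is an elementary computation once $\omega_{\bar z}=A$ is solved by the Cauchy transform, and no genuinely deep ``similarity principle'' machinery is required --- your proof is in effect self-contained. Second, to conclude that $G=e^{-\omega}\Phi$ is holomorphic you should state explicitly that $G_{\bar z}=0$ holds in the distributional sense (the product rule is legitimate because $\omega\in W^{1,p}_{\mathrm{loc}}$ for every $p<\infty$ while $\Phi=h_z$ is $C^1$) and then invoke Weyl's lemma for the continuous function $G$. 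With that point made precise, everything else is exactly right: the coefficient $A=-\bigl[(\log\wp^2)_w\circ h\bigr]h_{\bar z}$ is continuous and locally bounded because $h\in C^2$ and $\wp$ is a nonvanishing $C^2$ metric; multiplication by the positive continuous factor $\wp(h)\,e^{\operatorname{Re}\omega}$ changes neither the location nor the order $n$ of a zero; and the conjugated equation $(\overline{\Psi})_{\bar z}=\overline{B}\,\overline{\Psi}$ handles $|\bar\partial h|$ symmetrically, including the dichotomy that each of $|\partial h|$, $|\bar\partial h|$ either vanishes identically or has isolated zeros of well-defined integer order.
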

\begin{theorem}\label{teo1}  Let  $M=(\Omega,\wp)$ be a non-negatively curved Riemannian surface.
Assume that $f:\mathbb{D}\to M$ is a $\wp-$harmonic mapping that preserves the orientation. Let $m(z) = |{f_{\bar z}}/{f_z}|$. If $f_{\bar z}(0)=0$, then $$|m(z)|\le |z|^2,$$ or what is the same $$|f_{\bar z}(z)|\le |z|^2|f_z(z)|.$$
Further if  $\mu=\sqrt{m}$ then  \begin{equation}\label{numa}|\mu(z)|\le |z|,\text{ and }|\nabla \mu(0)|\le 1.\end{equation}
If $\mu(0)\neq 0$, $f:\ID\to D_1(o)$, and if $D_1(o)$ is disjoint from the cut-locus of $o$,  and $K_\wp\le \kappa^2<\pi^2/4$, then we have \begin{equation}\label{mumu}|\nabla \mu(0)|\le C_\kappa^\ast,\end{equation} for a constant $C_\kappa^\ast\ge 1$ depending on $\kappa$.

%If $$\int_{D} |g[z,w]|\mathcal{K}(w(p)|J(u)\le \int_{D} |g[z,w]|K(w(p)|J(u)\le C(\kappa) $$???
\end{theorem}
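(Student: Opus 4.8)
The plan is to extract everything from the two Bochner identities recorded above. Throughout write $\mu=|f_{\bar z}/f_z|=|\bar\partial f|/|\partial f|$ for the dilatation, so that $m=\mu^2$. Subtracting the two Bochner formulas gives
\[
\Delta\log\mu=\tfrac12\bigl(\Delta\log|\bar\partial f|^2-\Delta\log|\partial f|^2\bigr)=\mathcal K_\wp\,J(z,f).
\]
Since $M$ is non-negatively curved ($\mathcal K_\wp\ge0$) and $f$ preserves orientation ($J(z,f)>0$, whence in particular $|\partial f|>|\bar\partial f|\ge0$, so $|\partial f|$ never vanishes), the right-hand side is $\ge0$ and $\log\mu$ is subharmonic on $\mathbb D$. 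Moreover $0\le\mu<1$, and by Proposition~\ref{prop2} the zeros of $\mu$ (i.e. of $|\bar\partial f|$) are isolated of well-defined integer order, so $\log\mu$ has only logarithmic poles and the maximum principle for subharmonic functions is available.

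First I would prove the Schwarz bound. Assume $f_{\bar z}(0)=0$; by Proposition~\ref{prop2} one has $\mu(z)=|z|^{k}w(z)$ near the origin with $w(0)\ne0$ and $k\ge1$. Consider $v(z)=\log\mu(z)-\log|z|$. It is subharmonic on $\mathbb D\setminus\{0\}$; near the origin $v=(k-1)\log|z|+\log w$ with $k-1\ge0$, so $v$ stays bounded above as $z\to0$; and $\limsup_{|z|\to1}v\le0$ because $\mu<1$. The maximum principle then yields $v\le0$, i.e. $\mu(z)\le|z|$, which is exactly $\mu=\sqrt m\le|z|$ and $m\le|z|^2$. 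The estimate $|\nabla\mu(0)|\le1$ is then immediate from $\mu(z)\le|z|$ together with $\mu(0)=0$.

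For the final inequality there is no longer a zero to compare against, so the argument becomes a quantitative interior gradient estimate. Because $\mu(0)\ne0$, the functions $|\partial f|,|\bar\partial f|$ are smooth and nonvanishing near $0$ (Proposition~\ref{prop2}), so $\nabla\mu(0)$ is classically defined. I would start from $\Delta\log\mu=\mathcal K_\wp J(z,f)$ and bound the right-hand side using $0\le\mathcal K_\wp\le\kappa^2$ together with the derivative bound \eqref{cpk} of Theorem~\ref{teo2}, namely $\wp(f)|\nabla f|\le C'_\kappa/(1-|z|^2)$, which controls $J(z,f)\le|\partial f|^2$ on $\{|z|\le1/2\}$; thus $\Delta\log\mu$ is bounded there by a constant depending only on $\kappa$. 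Feeding this into the Green/Poisson representation on the fixed disk $\{|z|<1/2\}$ bounds $|\nabla\log\mu(0)|$ in terms of $\sup|\Delta\log\mu|$ and of the oscillation of $\log\mu$ on the boundary circle, whence $|\nabla\mu(0)|=\mu(0)\,|\nabla\log\mu(0)|$ is finite.

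The main obstacle is to make this last bound \emph{uniform}, i.e. depending on $\kappa$ only. Two points need care: $\log\mu$ is a priori bounded only from above (it may tend to $-\infty$ at other zeros of $\bar\partial f$), and the disk on which $\mu$ stays bounded below depends on $f$. I would resolve both by a normal-families argument: the uniform H\"older estimate of Proposition~\ref{propo}, the derivative bound of Theorem~\ref{teo2}, and elliptic bootstrapping for equation \eqref{el} make the family $\mathcal H_\kappa(o)$ precompact in $C^2_{\mathrm{loc}}(\mathbb D)$. On the subfamily with $\mu(0)\ne0$ the functional $f\mapsto\nabla\mu(0)$ is then continuous and bounded, while the degenerate regime $\mu(0)\to0$ is absorbed by part~(a) through the identity $|\nabla\mu(0)|=\mu(0)\,|\nabla\log\mu(0)|$. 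Taking the supremum produces a finite $C^\ast_\kappa$ depending only on $\kappa$, and monotonicity ($C^\ast_\kappa\ge1$, with equality in the flat limit) follows since for $\kappa=0$ the estimate reduces to the Euclidean Schwarz bound $|\nabla\mu(0)|\le1$ of part~(a).
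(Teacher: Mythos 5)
Your treatment of the first two assertions is correct and follows essentially the paper's own route: the Bochner formulas give subharmonicity of $\log\mu$, Proposition~\ref{prop2} gives a well-defined order of the zero at the origin, and the maximum principle applied to $\log\mu-\log|z|$ yields $\mu(z)\le|z|$ and then $|\nabla\mu(0)|\le1$. Two small remarks: the correct identity is $\Delta\log\mu=\tfrac12\mathcal{K}_\wp J(z,f)$ (what you computed is $\Delta\log m$), which does not affect the sign argument; and where the paper handles possible discontinuity at the boundary by a Hayman--Kennedy monotone approximation, you use boundary $\limsup$'s, an equally valid variant.

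The argument for \eqref{mumu}, however, has a genuine gap, and it sits exactly at the step you flagged. The paper does not pass through $\log\mu$: it writes the Green--Poisson (Riesz) representation for $\mu$ itself on $\{|z|<r\}$, bounds the gradient of the harmonic part at $0$ by the Euclidean harmonic Schwarz lemma (using only $0\le\mu\le1$), bounds the potential term by a pointwise estimate of $\Delta\mu$ via \eqref{cpk}, and takes $r=1/2$ to get the explicit constant \eqref{csk}. Your substitute --- a gradient estimate for $\log\mu$ plus a normal-families argument, with the degenerate regime $\mu(0)\to0^+$ ``absorbed by part (a)'' --- does not close. Part (a) applies only when $f_{\bar z}(0)=0$ exactly; the identity $|\nabla\mu(0)|=\mu(0)\,|\nabla\log\mu(0)|$ helps only if $|\nabla\log\mu(0)|$ is uniformly controlled, which is precisely what degenerates as $\mu(0)\to0$; and $f\mapsto|\nabla\mu(0)|$ is not upper semicontinuous under $C^2_{\mathrm{loc}}$ convergence at a limit with $f_{\bar z}(0)=0$, because $\mu=\sqrt{|f_{\bar z}|/|f_z|}$ loses differentiability at zeros of $f_{\bar z}$. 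Concretely, take $\wp\equiv1$ (so $\mathcal{K}_\wp=0$, $\kappa$ arbitrary) and
\begin{equation*}
f_\epsilon(z)=\frac{1}{12}\Bigl(10z+\overline{\epsilon z+\tfrac12 z^2}\Bigr),\qquad 0<\epsilon<1,
\end{equation*}
an orientation-preserving harmonic map of $\ID$ into $\ID$ with $\mu_\epsilon(0)=\sqrt{\epsilon/10}\neq0$ but $|\nabla\mu_\epsilon(0)|=\tfrac12(10\epsilon)^{-1/2}\to\infty$. So the supremum you take at the end is infinite along such families, and no compactness argument can rescue the scheme. (A further, secondary issue: your precompactness claim for $\mathcal{H}_\kappa(o)$ quietly requires compactness of the family of metrics $\mathfrak{M}_\kappa$ as well, which is not addressed.)

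The same family shows the difficulty is intrinsic rather than a defect of your particular patch. Where $\mu>0$ one has $\Delta\mu=\mu\bigl(\tfrac12\mathcal{K}_\wp J+|\nabla\log\mu|^2\bigr)=\tfrac{\mu}{2}\mathcal{K}_\wp J+\frac{|\nabla\mu|^2}{\mu}$, whereas the paper's computation uses $\Delta\mu=\mathcal{K}_\wp\,\wp^2(f)\,(|f_z|^2-|f_{\bar z}|^2)$; the omitted term $|\nabla\mu|^2/\mu$ is exactly the quantity that blows up in the example above (there $\mathcal{K}_\wp=0$ yet $\mu$ is not harmonic). Thus in the regime $\mu(0)\to0^+$ a bound of the form \eqref{mumu} with $C^*_\kappa$ depending on $\kappa$ alone cannot hold, and any correct proof must either restrict the statement (e.g.\ let the constant depend also on a lower bound for $\mu(0)$, interpolating with part (a) at $\mu(0)=0$) or control the extra term $|\nabla\mu|^2/\mu$ by additional hypotheses. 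Your instinct to isolate the degenerate regime was right; the resolution you propose for it is the step that fails.
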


\begin{proof}
Prove first \eqref{mumu}.
Let $r\in(0,1)$, $P_r(z) = \frac{r^2-|z|^2}{|z-r|^2}$. By using the well-known  formula  we have
\begin{equation}\label{mpg}\mu(z) = h_r(z)+\frac{1}{r^2}\int_{r \ID} \log \left|\frac{z-w}{1-\frac{1}{r^2}z\overline{w}}\right| \Delta \mu (w)dudv, \ \ w=u+iv,\end{equation} where $h_r: r\ID \to \ID$ is the harmonic function defined by $$h_r(z)=\int_{0}^{2\pi}P_r(z e^{it})\mu[re^{it}]\frac{dt}{2\pi}.$$
Further $$\Delta \mu = K_\wp \wp^2(f(z)) (|f_z|^2 -|f_{\bar z}|^2).$$

From Schwarz lemma for Euclidean harmonic functions (\cite[p.~76]{Duren2004}) we obtain that $|\nabla h_r(0)|\le \frac{4}{r\pi}$.

On the other hand  $$\left|\nabla_z\log \left|\frac{z-w}{1-\frac{1}{r^2}z\overline{w}}\right|\right|_{z=0}=\frac{\sqrt{r^2-|w|^2}}{r^2|w|}.$$

Thus by  \eqref{cpk} we have $$|\nabla \mu(0)|\le \frac{4}{r\pi}+\kappa^2 (C'_\kappa)^2\frac{1}{r^2}\int_{r\ID} \frac{\sqrt{r^2-|w|^2}}{r^2|w|}\frac{1}{(1-|w|^2)^2}du dv.$$

Then for $r=1/2$ we obtain \begin{equation}\label{csk}|\nabla \mu(0)|\le C^*_\kappa:=\frac{8}{\pi}+ \pi\kappa^2 (C'_\kappa)^2   (5 \coth^{-1}(2)-2).\end{equation}

Now we improve the constant for the case $\mu(0)=0$.
 If $\mathcal{K}_\wp\ge 0$, and $f:\mathbb{D}\to \Omega$, then by Bochner formula we have $$\Delta \log \mu(z)= \frac{\mathcal{K}_{\wp}}{2} J\ge 0.$$ Since $$\mu=\sqrt{\frac{|f_{\bar z}|}{|f_z|}}=\frac{|\Phi(z)|}{\wp(f(z))|f_z|},$$ on account of Proposition~\ref{prop2}, it follows that also $\mu$ has well-defined order of the zero. So $\log \left(\mu(z)/|z|\right)$ is subharmonic.

Assume now that $\mu$ is continuous on  $\overline{\mathbb{D}}$. %Then define the (outer) holomorphic function
%$$F(z) =\exp\left(\frac{1}{2\pi}\int_{-\pi}^\pi \frac{e^{\imath t}+z}{e^{\imath t}-z}\log \mu(e^{it})dt\right).$$ Then  $|F(z)|$ can be extended to be continuous in $\overline{\mathbb{D}}$ by $|F(e^{it})|=\mu(e^{it})$. Since $F$ is holomorphic, $\log |F(z)|$ is harmonic,
%
By the maximum principle $$\log (\mu(z)/|z|)\le 0.$$ Thus $$\mu(z)/|z| \le 1.$$ Therefore, $$|\mu(z)|\le |z|.$$

Further \begin{equation}\label{noth}\mu(u,v) = \mu(0,0) + \partial_x \mu(0,0)u+\partial_y \mu(0,0) + o(\sqrt{u^2+v^2}).\end{equation}
If $ \partial_x \mu(0,0)= \partial_y \mu(0,0)=0$ we have nothing to prove. If the last relation is not true then we choose $u= t\partial_x \mu(0,0)/|\nabla \mu(0,0)|$ and $v= t\partial_y \mu(0,0)/|\nabla \mu(0,0)|.$
By inserting $u$ and $v$ in \eqref{noth}, dividing by $t$  and letting $t\to 0$ we get $$|\nabla \mu(0,0)|\le 1.$$

If $\mu(z)/|z|$ is only upper-semicontinuous in $\overline{\mathbb{D}}$, then by \cite[Theorem~1.4]{hayman}, there exists a decreasing sequence of mappings $\mu_k(z)$ continuous in $\overline{\mathbb{D}}$ so that $$\lim_{k\to \infty} \mu_k(z) = \mu(z)/|z|.$$ Thus the proof can be reduced to the proof when $\mu$ is continuous up to the boundary.

%If $f_z(0)\neq 0$, then we consider the mapping $$F(z) = \frac{\mu(z)-\mu(0)}{1-\mu(z)\mu(0)}.$$ This mapping is subharmonic and satisfies the relation $|F(z)|\le 1$ and $F(0)= 0$.

%$$|F(z)|/|z|\le P[F](z)/|z|$$

\end{proof}

\section{Minimizers and minimal surfaces in Riemannian manifold}\label{sec3}

Assume  that $\X$ and $\Y$ are double connected domains and  that $\mathscr{W}_\wp^{1,2}(\X,\Y)$ is the class of mappings that belongs to  $\mathscr{W}_{\mathrm{loc}}^{1,2}$ and satisfy the inequality $$\int_{\X} \wp^2(f(z))(|f_z|^2+|f_{\bar z}|^2)dxdy +\int_{\X} \wp^2(f(z))|f(z)|^2 dxdy<\infty.$$

Assume that $\mathcal{H}^\wp(\X,\Y)\subset \mathscr{W}_\wp^{1,2}(\X,\Y)$ is the class of  homeomorphic mappings between $\X$ and $\Y$, that map the inner boundary onto inner boundary and outer boundary onto the outer boundary.

 Let $\overline{\mathcal{H}}^{\wp}(\X,\Y)$ be the closure of ${\mathcal{H}}^{\wp}(\X,\Y)$ in the strong topology of $\mathscr{W}_\wp^{1,2}(\X,\Y)$.

\subsection{Stationary mappings and Noether harmonic maps}\label{hopsec}

We call a mapping $h\in\overline{\mathcal{H}}^\wp(\X,\Y)$
\emph{stationary} if
\begin{equation}\label{stat}
\frac{d}{dt}\bigg|_{t=0}{\mathcal{E}^\wp}[h\circ \phi_t^{-1}]=0
\end{equation}
for every family of diffeomorphisms $t\to \phi_t\colon
\X\to\X$ which depend smoothly on the parameter $t\in\mathbb
R$ and satisfy
$\phi_0=\id$. The latter mean that the mapping $\X\times [0,\epsilon_0]\ni (z,t)\to \phi_t(z)\in \X $ is a smooth mapping for some $\epsilon_0>0$.
We now have.
\begin{lemma}\label{ctheory}\cite{calculus1}
Let $\X=\A(r,R)$ be a circular annulus, $0<r<R<\infty$, and
assume that $\Y$ is a  doubly connected domain. If $h\in
\overline{\mathcal{H}}^\wp(\X,\Y)$ is a stationary mapping, then
\begin{equation}\label{hopf1}\wp^2(h(z))
h_z\overline{h_{\bar z}} \equiv \frac{c}{z^2}\qquad \text{in }\X
\end{equation}
where $\mathbf{c}\in\R$ is a constant. Moreover, $\mathbf{c}\ge  0$ if $\mathrm{Mod}(\X)\le \mathrm{Mod}(\Y)$ and $\mathbf{c}< 0$ if $\mathrm{Mod}(\X)> \mathrm{Mod}(\Y)$. Further every stationary mapping that is a diffeomorphism is a minimizer.
\end{lemma}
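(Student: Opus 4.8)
The plan is to exploit the \emph{inner} (domain) variation, which is exactly what the stationarity condition \eqref{stat} encodes, and to read the Hopf differential off from it. First I would fix the vector field $V=\frac{d}{dt}\big|_{t=0}\phi_t$ generating the family $\phi_t\colon\X\to\X$; since each $\phi_t$ preserves the annulus, $V$ is tangent to both boundary circles. Writing $\psi_t=\phi_t^{-1}$ and changing variables $z=\phi_t(\zeta)$ in $\mathcal{E}^\wp[h\circ\psi_t]$ (the weight $\wp^2(h)$ is transported by the substitution and produces no extra term), a direct differentiation at $t=0$ collapses, after the zeroth-order and $\re V_\zeta$ terms cancel, to
$$\frac{d}{dt}\Big|_{t=0}\mathcal{E}^\wp[h\circ\phi_t^{-1}]=-4\,\re\int_{\X}\Phi(z)\,V_{\bar z}\,dx\,dy,\qquad \Phi:=\wp^2(h)\,h_z\,\overline{h_{\bar z}}.$$
Hence $\re\int_{\X}\Phi\,V_{\bar z}\,dx\,dy=0$ for every admissible $V$. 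Testing against $V$ with compact support in $\X$ gives $\Phi_{\bar z}=0$ weakly, i.e. $\Phi$ is holomorphic on $\X$ (consistent with the holomorphy of $\mathrm{Hopf}(f)$ in \eqref{anal}; here I would invoke Weyl's lemma).

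Next I would extract the boundary constraint. Integrating by parts and using $\Phi_{\bar z}=0$, the stationarity identity becomes $\im\oint_{\partial\X}\Phi\,V\,dz=0$. On $\{|z|=\rho\}$ a tangent field is $V=iz\,u(\theta)$ with $u$ real and $dz=iz\,d\theta$, so the condition reads $\im\big(z^2\Phi(z)\big)=0$ on both circles $|z|=r$ and $|z|=R$. Since $g(z):=z^2\Phi(z)$ is holomorphic on $\X$, expanding $\Phi=\sum_n c_n z^n$ and matching Fourier coefficients of $\im g=0$ on the two circles forces $c_{m-2}\,(r^{2m}-R^{2m})=0$ for $m\neq0$, hence $c_n=0$ for $n\neq-2$, while the $m=0$ relation makes $c_{-2}$ real. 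This gives $\Phi(z)=\mathbf{c}/z^2$ with $\mathbf{c}\in\R$, which is \eqref{hopf1}. The Fourier version is preferable to a maximum-principle argument because it needs only the Laurent expansion rather than full boundary regularity.

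For the sign of $\mathbf{c}$ I would pass to the logarithmic coordinate $\tau=\log z$, in which $\A(r,R)$ becomes a flat cylinder of modulus $\Mod(\X)=\frac1{2\pi}\log(R/r)$ and $\Phi\,dz^2=\mathbf{c}\,d\tau^2$ is a constant real quadratic differential. In the rotationally symmetric model $h=\psi(\rho)e^{i\theta}$ one computes $z^2\Phi=\frac14\big(\rho^2(\psi')^2-\psi^2\big)\equiv\mathbf{c}$, so $\mathbf{c}\ge0$ exactly when $\psi/\rho$ is nondecreasing, i.e. when the target is conformally at least as wide as the domain, $\Mod(\X)\le\Mod(\Y)$, with $\mathbf{c}=0$ in the conformal case $\Y\cong\X$. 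The general case reduces to this model through the conformal invariance of the moduli and a symmetrization/averaging argument, the sign of $\mathbf{c}$ being a conformal invariant of the pair $(\X,\Y)$.

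Finally, to show that a stationary diffeomorphism is a minimizer, I would set up a \textbf{calibration}. Using $\Phi=\mathbf{c}/z^2$, I would bound the $\wp$-energy density of an arbitrary competitor $g\in\overline{\mathcal{H}}^\wp(\X,\Y)$ from below by the real part of a closed form whose integral is a homotopy/boundary invariant, hence identical for $h$ and $g$, with equality precisely when $\mathrm{Hopf}(g)=\mathbf{c}/z^2$ together with the matching sense-preserving structure. Since the stationary diffeomorphism $h$ realizes equality, $\mathcal{E}^\wp[h]\le\mathcal{E}^\wp[g]$. I expect this calibration step, namely producing the correct null-Lagrangian and controlling it throughout the weak closure class $\overline{\mathcal{H}}^\wp(\X,\Y)$, to be the main obstacle, together with the boundary regularity needed to make the integration by parts in the second paragraph fully rigorous.
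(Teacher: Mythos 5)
Your opening two paragraphs reconstruct, essentially verbatim, the proof of the Hopf-differential identity as it is given in \cite{calculus1} (note the paper under review does not prove Lemma~\ref{ctheory} at all; it imports it, so the comparison is with the source). The key observation that inner variations never differentiate the metric factor $\wp^2(h)$ is correct and is exactly why the statement is metric-independent; compactly supported fields plus Weyl's lemma give holomorphy of $\Phi=\wp^2(h)h_z\overline{h_{\bar z}}$, and tangential fields $V=izu(\theta)$ on the circles give $\im\bigl(z^2\Phi\bigr)=0$ on both boundary components. Your Fourier matching is then right: writing $g=z^2\Phi=\sum_m b_m z^m$, the boundary condition pairs $b_m$ with $\overline{b_{-m}}$ via $b_m\rho^m=\overline{b_{-m}}\rho^{-m}$ for $\rho\in\{r,R\}$, forcing $b_m=0$ for $m\neq0$ and $\im b_0=0$, i.e.\ \eqref{hopf1} with $\mathbf{c}\in\R$. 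The trace issue you flag is genuine but standard: since $\Phi\in L^1(\X)$ is holomorphic, one can test with $V=izu(\theta)\eta(|z|)$ and compute against the Laurent series directly, or average the Stokes identity over radii, so this is a repairable technicality, not a gap.

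The sign dichotomy is where your proposal has a genuine gap. You verify the claim only in the rotationally symmetric model and then invoke ``a symmetrization/averaging argument, the sign of $\mathbf{c}$ being a conformal invariant of the pair $(\X,\Y)$.'' That last assertion is circular: $\mathbf{c}$ depends a priori on $h$ and on $\wp$, and the statement that its sign is determined by the moduli alone is precisely what must be proved; moreover no symmetrization is available that preserves both stationarity and $\mathbf{c}$ for a general metric $\wp$ and a general doubly connected $\Y$. (In the model you also dropped the factor $\wp^2$: one has $z^2\Phi=\tfrac{1}{4}\wp^2(\psi)\bigl(\rho^2\psi'^2-\psi^2\bigr)$; harmless for the sign, but symptomatic.) A correct route — the one effectively behind \cite{calculus1} and its Euclidean antecedents — is pointwise: $z^2\Phi\equiv\mathbf{c}$ in polar coordinates says $\wp^2(h)\bigl(\rho^2|h_\rho|^2-|h_\theta|^2\bigr)=4\mathbf{c}$ together with the orthogonality $\Re\bigl(h_\rho\overline{h_\theta}\bigr)=0$. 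If $\mathbf{c}\le 0$ then $\rho|h_\rho|\le|h_\theta|$, so for any metric $\sigma$ admissible for the family of curves joining the boundary components of $\Y$, Cauchy--Schwarz along radii plus $\sigma^2|h_\rho|^2\rho\le\sigma^2|h_\rho||h_\theta|=\sigma^2 J_h\,\rho$ yields $2\pi/\Mod(\X)\le\int_\Y\sigma^2$, whence $\Mod(\Y)\le\Mod(\X)$ after taking the infimum over $\sigma$; the case $\mathbf{c}=0$ forces $h_{\bar z}\equiv0$, hence conformality and equality of moduli, which sorts out the strict/weak split in the statement. Nothing of this Gr\"otzsch-type mechanism appears in your sketch.

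The final claim, that every stationary diffeomorphism is a minimizer, is likewise not proved: ``I would set up a calibration'' is a statement of intent, and you yourself identify the construction of the null-Lagrangian as the main obstacle. That obstacle is the substance of the result: the trivial bound \eqref{ener2}, $\mathcal{E}^\wp[g]\ge 2\mathcal{A}(\wp)$, is attained only by conformal maps, which do not exist when $\Mod(\X)\neq\Mod(\Y)$, so one needs a sharp lower bound calibrated to the extremal Hopf datum $\mathbf{c}/z^2$ and valid on the whole weak closure $\overline{\mathcal{H}}^\wp(\X,\Y)$ — this free-Lagrangian analysis is the hard core of \cite{calculus1} (and, for $\wp\equiv1$, of the Nitsche-type literature), not a routine verification. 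In summary: the first assertion of the lemma is correctly and essentially canonically proved in your proposal; the sign dichotomy rests on a circular reduction and needs the length--area argument indicated above; the minimality assertion remains entirely open in your write-up.
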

Notice first that a change of variables $w=f(z)$ in~\eqref{ener1}
yields
\begin{equation}\label{ener2}
{\mathcal{E}^\wp}[f] = 2\int_{\X} \wp^2(f(z))J_f(z)\, dz +
4\int_{\X}\wp^2(f(z)) \abs{f_{\bar z}}^2dz\ge 2
\mathcal{A}(\wp)
\end{equation}
where $J_f$ is the Jacobian determinant and $\mathcal{A}(\wp)$ is
the area of $\Y$ and $dz:=dx\wedge dy$ is the area element
w.r. to Lebesgue measure on the complex plane. A conformal mapping
of $f:\X\onto\Y$; that is, a homeomorphic solution of the
Cauchy-Riemann system $ f_{\bar z}=0$, would be an obvious choice
for the minimizer of~\eqref{ener2}. For arbitrary multiply connected
domains there is no such mapping. In this way the $\wp-$harmonic mappings comes to the stage.
% The existence of an energy-minimal diffeomorphism $f\colon \X\onto\Y$ may be interpreted as
%saying that the Cauchy-Riemann equation $\bar\partial f=0$ admits a
%diffeomorphic solution in the least squares sense, meaning that
%$\|\bar \partial f\|_{L^2}$ assumes its minimum. For this reason
%

The following lemma makes a relation with minimizers and minimal surfaces in manifold $M=M_0\times \mathbb{R}$.

%\[\begin{split}- \frac{2\wp_v}{\wp} \left(a_y(z) b_y(z)+a_x(z) b_x(z)\right)&+\frac{\wp_u}{\wp} \left(-a_y(z)^2+b_y(z)^2-a_x(z)^2+b_x(z)^2\right)\\&= \left(a_{xx}(z)+a_{yy}(z)\right) \end{split}\]

%\[\begin{split}-\frac{2 \wp_u}{\wp} \left(a_y(z) b_y(z)+a_x(z) b_x(z)\right)&+\frac{\wp_v}{\wp} \left(a_y(z)^2-b_y(z)^2+a_x(z)^2-b_x(z)^2\right)\\&= \left(b_{xx}(z)+b_{yy}(z)\right).\end{split}\]

\begin{proposition}\label{lemag}
Assume that $f=a+\imath b:D\to \Omega$ is a diffeomorphism so that $$\wp^2(f(z)) f_z \bar f_z=\Phi^2(z),$$ where $\Phi$ is a holomorphic function in $D$. Assume that  for $z\in D$, the following function  $\Phi(z) = \int \Phi(z) dz$ is well-defined. Then for
\begin{equation}\label{hareq}
\omega(z) = 2\Re (\Phi(f^{-1}(z)))
\end{equation}
we have \begin{equation}\label{newmin1}\mathrm{div}\left(\frac{\nabla \omega}{\sqrt{1+\frac{|\nabla \omega(u,v)|^2}{\wp^2(u,v)}}}\right)=0.\end{equation}

In other words, $f$ defines the conformal parameters of the $\wp-$minimal graph $\Sigma=\{(u,v,\omega(u,v)), (u,v)\in \Omega\}$. The conformal parametrization $\chi: D\to (\Omega, \wp) \times \mathbb{R}$  is given by   $$\chi(z) = (\Re f,\Im f, 2\Re (\Phi(z))).$$
This means that $$|\chi_x|^2_{\hbar}-|\chi_y|^2_{\hbar}=\left<\chi_x, \chi_y\right>_{\hbar}=0,$$ where the metric $d\hbar^2$ is given in \eqref{hbar}.
\end{proposition}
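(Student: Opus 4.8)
The plan is to read the two displayed identities as the single assertion that $\chi=(\Re f,\Im f,H)$, with $H(z):=2\Re\!\big(\int\Phi(z)\,dz\big)$, is a weakly conformal harmonic map into $M=(\Omega,\wp)\times\mathbb{R}$, and then to descend from this parametric picture to the non-parametric equation \eqref{newmin1} using that $f$ is a diffeomorphism. First I would record the two harmonicity facts about the pieces of $\chi$. Since $\Phi$ is holomorphic, so is $\Phi^2$, and the hypothesis says the Hopf differential $\wp^2(f)\,f_z\overline{f_{\bar z}}=\Phi^2$ is a holomorphic quadratic differential; by the equivalence stated just after \eqref{anal} this forces $f$ to solve \eqref{el}, i.e. $f$ is $\wp$-harmonic. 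On the other hand the integrand $\Phi$ is holomorphic, so $H$ is the real part of a holomorphic function, hence Euclidean harmonic, with $H_z=\Phi$ and $H_{\bar z}=\overline{\Phi}$. Because the target metric \eqref{hbar} is a product, the tension field of $\chi=(f,H)$ splits into the tension field of $f$ into $(\Omega,\wp)$ and the ordinary Laplacian $\Delta H$; both vanish, so $\chi$ is harmonic into $M$.

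The heart of the proof is conformality, $\inn{\chi_x,\chi_x}_{\hbar}=\inn{\chi_y,\chi_y}_{\hbar}$ and $\inn{\chi_x,\chi_y}_{\hbar}=0$, equivalently $\inn{\chi_z,\chi_z}_{\hbar}=0$. Using $d\hbar^2=\wp^2|dz|^2+dt^2$, the first fundamental form has entries $E=\wp^2(f)|f_x|^2+H_x^2$, $G=\wp^2(f)|f_y|^2+H_y^2$ and $F=\wp^2(f)\Re(f_x\overline{f_y})+H_xH_y$. Substituting $f_x=f_z+f_{\bar z}$ and $f_y=i(f_z-f_{\bar z})$ gives $|f_x|^2-|f_y|^2=4\Re(f_z\overline{f_{\bar z}})$ and $\Re(f_x\overline{f_y})=-2\Im(f_z\overline{f_{\bar z}})$, while the same bookkeeping for the real function $H$ yields $H_x^2-H_y^2=4\Re(H_z^2)$ and $H_xH_y=-2\Im(H_z^2)$. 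Hence $E-G=4\Re\big(\wp^2(f)f_z\overline{f_{\bar z}}+H_z^2\big)$ and $F=-2\Im\big(\wp^2(f)f_z\overline{f_{\bar z}}+H_z^2\big)$, so $\chi$ is conformal exactly when the single complex identity $H_z^2=-\wp^2(f)\,f_z\overline{f_{\bar z}}$ holds, which is precisely the conformal minimal condition of Subsection~\ref{subsec22}. Substituting $H_z=\Phi$ turns this into the hypothesized relation between $\Phi^2$ and the Hopf differential of $f$, so $\chi$ is a conformal (harmonic) immersion. Moreover, since conformality gives $|H_z|^2=\wp^2(f)|f_z||f_{\bar z}|$, the common value $E=G$ equals $\wp^2(f)\big(|f_z|+|f_{\bar z}|\big)^2$, which is the stated induced metric.

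It remains to descend to the graph. A conformal harmonic immersion is minimal, because its mean curvature vector is a multiple of the tension field, which we arranged to vanish. As $f$ is a diffeomorphism, $\chi$ parametrizes the graph $\{(u,v,\omega(u,v)):(u,v)\in\Omega\}$ with $\omega=H\circ f^{-1}=2\Re(\Phi\circ f^{-1})$, which is \eqref{hareq}. Finally, the equation \eqref{newmin} was derived precisely as the Euler--Lagrange equation of the $\wp$-area $\|\omega\|$, so a graph has vanishing mean curvature if and only if its height function solves \eqref{newmin}; applying this to the minimal graph just produced yields \eqref{newmin1}, completing the proof.

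I expect the main obstacle to be the conformality computation: organizing the real/imaginary decomposition so that $E-G$ and $F$ collapse into the single Hopf identity, and keeping the sign in $H_z^2=-\wp^2(f)f_z\overline{f_{\bar z}}$ consistent with the choice $\omega=2\Re\!\int\Phi$ and with the Hopf relation in the hypothesis. The passage from the parametric minimal immersion to the non-parametric equation, though conceptually clean, must be justified by invoking that for a graph, vanishing mean curvature is equivalent to the area Euler--Lagrange equation \eqref{newmin}, rather than re-deriving \eqref{newmin1} by a direct and lengthy substitution of $\omega=2\Re(\Phi\circ f^{-1})$ into the divergence operator.
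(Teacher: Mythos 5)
Your proposal is correct in substance but takes a genuinely different route from the paper. The paper's proof is a direct computation: it rewrites the $\wp$-harmonicity of $f$ as a system for $\Delta a$, $\Delta b$, expands the identity $\Delta(\omega\circ f)=0$ (which holds because $\omega\circ f=2\Re\Phi$ is Euclidean harmonic), then solves \eqref{fpf} together with $\partial\omega\,f_z+\bar\partial\omega\,\bar f_z=\Psi$ for $f_z$ and $\bar f_z$ in terms of $\Psi$, $\wp$ and $\nabla\omega$, and substitutes until the second-order equation collapses into the divergence form \eqref{newmin}. You instead argue parametrically: $f$ is $\wp$-harmonic (holomorphic Hopf differential plus the diffeomorphism hypothesis -- note that holomorphicity of the Hopf differential alone gives only Noether harmonicity, but for a diffeomorphism $|f_z|\neq|f_{\bar z}|$, so the tension field must vanish, which justifies the ``if and only if'' you quote), the height is Euclidean harmonic, the tension field splits in the product metric \eqref{hbar}, and conformality reduces via $E-G-2iF=4\bigl(\wp^2(f)\,f_z\overline{f_{\bar z}}+H_z^2\bigr)$ to the single identity $H_z^2=-\wp^2(f)\,f_z\overline{f_{\bar z}}$; your bookkeeping here, and the identification of the induced metric $\wp^2(f)\bigl(|f_z|+|f_{\bar z}|\bigr)^2$, are both correct. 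What your route buys is conceptual clarity and a direct link to Subsection~\ref{subsec22}; what it costs is that the final descent imports two facts the paper never proves: that a conformal harmonic map is a minimal immersion (you should note $\chi$ is an immersion because $f$ is a diffeomorphism, so $|f_z|+|f_{\bar z}|>0$), and that vanishing mean curvature of a graph in $(\Omega,\wp)\times\mathbb{R}$ is equivalent to the Euler--Lagrange equation \eqref{newmin} of the area \eqref{inervar}. Since the paper \emph{defines} minimal graphs by \eqref{newmin}, you should close this loop explicitly, e.g.\ by observing that a conformal harmonic map is area-stationary for all compactly supported variations, in particular for the vertical variations $\omega+t\varpi$ from which \eqref{newmin} was derived; the paper's brute-force substitution avoids any such appeal.

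The one step that fails as written is exactly the sign you flag as an ``expected obstacle'' but never resolve. Write $\Psi$ for the holomorphic square root in the hypothesis and $\Phi=\int\Psi$. With the hypothesis literally as printed, $\wp^2(f)\,f_z\bar f_z=+\Psi^2$ and $H_z=\Psi$, so your own criterion gives $\langle\chi_z,\chi_z\rangle_{\hbar}=\wp^2(f)f_z\bar f_z+H_z^2=2\Psi^2\neq0$: this $\chi$ is \emph{not} conformal, and your assertion that substituting $H_z=\Phi'$ ``turns this into the hypothesized relation'' is off by a sign. The consistent convention is $\wp^2(f)f_z\bar f_z=-\Psi^2$ (equivalently, keep the hypothesis and take the height $2\Im(\Phi\circ f^{-1})$); this matches both the conformality condition $(h_w)^2=-(\wp\circ f)^2f_w\bar f_w$ of Subsection~\ref{subsec22} and Proposition~\ref{lemlem}, where the catenoid-type ($\log$) height goes with $\mathbf{c}<0$. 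To be fair, the slip is in the statement itself and persists in the paper's own proof: \eqref{fpf} asserts $\bar f_z=\Psi^2/(f_z\wp^2)$, while the displayed solutions for $f_z$ and $\bar f_z$ there multiply to $f_z\bar f_z=-\Psi^2/\wp^2$. So the obstacle you anticipated is real; to make your argument airtight, fix the sign convention at the outset and carry it through the conformality identity, rather than leaving it as an anticipated difficulty.
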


\begin{proof}
Let $f=a+\imath b$.  Then
$$2\mathrm{Im}(f_z{ f_{ \bar z}})=-(a_x b_x +a_y b_y).$$
The  $\wp-$ harmonic equation \eqref{el} can be written
\begin{equation}\label{equ1}f_{z\bar z}+2\frac{\partial \wp (w)}{\wp(w)} \circ f \cdot f_z f_{\bar z}=0.\end{equation}
Equation \eqref{equ1} is equivalent with the following system of PDE
\begin{equation}\label{hhe}\Delta b=\left(\frac{4 \wp_u(f)}{\wp} \left(\mathrm{Im}(f_z f_{\bar z})\right)+\frac{\wp_v(f)}{\wp} \left(4\mathrm{Re}(f_z f_{\bar z}\right)\right)\end{equation}

\begin{equation}\label{gge}\Delta a=\left(\frac{4\wp_v(f)}{\wp} \left(\mathrm{Im}(f_z f_{\bar z})\right)+\frac{\wp_u(f)}{\wp} \left(-4\mathrm{Re}(f_z f_{\bar z})\right)\right).\end{equation}

From \eqref{hareq} we obtain that the function $H(z)=\omega(f(z))$ is a harmonic function, so its Laplacian vanishes and this can be written in the form
\[\begin{split}(b_y^2 &+b_x^2)\omega_{vv}(w) +2 (a_x b_x+a_y b_y )\omega_{uv}(w)+\left(a_y^2+a_x^2\right) \omega_{uu}(w)\\&+\omega_{u}(w) \left(a_{yy}+a_{xx}\right)+\omega_{v}(w) \left(b_{xx}+b_{yy}\right)=0\end{split}\]
which can be written as
\begin{equation}\label{vendos}\begin{split}(|f_z-&\bar f_z|^2)\omega_{vv}(w) -4\mathrm{Im}(f_z{ f_{ \bar z}})\omega_{uv}(w)+\left(|f_z+\bar f_z|^2\right) \omega_{uu}(w) \\&+\omega_{u}(w) \left(4 \frac{\wp_v(f)}{\wp} \left(\mathrm{Im}(f_z f_{\bar z})\right)+\frac{\wp_u(f)}{\wp} \left(-4\mathrm{Re}(f_z f_{\bar z})\right)\right)
\\&+\omega_{v}(w) \left(\frac{4 \wp_u(f)}{\wp} \left(\mathrm{Im}(f_z f_{\bar z})\right)+\frac{\wp_v(f)}{\wp} \left(4\mathrm{Re}(f_z f_{\bar z}\right)\right)=0.
\end{split}\end{equation}
From
$$\wp^2(f(z)) f_z \bar f_z=\Psi^2(z),$$
we have \begin{equation}\label{fpf}\bar f_z=\frac{\Psi^2(z)}{f_z \wp^2(f(z))}.\end{equation}
Let $$\Phi=\int \Psi(z) dz.$$
By differentiating the equation $$\omega(f(z))=\psi(z)=2\Re (\Phi(z))$$ we have
\begin{equation}\label{epp}\partial \omega f_z + \bar\partial \omega \bar f_z=\Phi'(z)=\Psi(z).\end{equation}
%$$ o=-\sqrt{-\mathbf{c}}\frac{z f_z - \bar z {f}_{\bar z}}{\sqrt{|z|^4(|f_z|^2-|f_{\bar z}|^2)^2-4\mathbf{c}|z|^2(|f_z|^2 +| {f}_{\bar z}|^2-2\mathbf{c})}}$$
Now recall
$f= a+\imath b$
and so
$f_z+\bar f_z=a_x-\imath a_y$
and
$f_z-\bar f_z=-b_y+\imath b_x$.
Further from \eqref{fpf} and \eqref{epp} we obatin
$$ f_z =\frac{\Psi(z)\left(\sqrt{\wp^2+|\nabla \omega|^2}-\wp\right)}{2  \partial \omega \wp}$$ and
$$ \bar f_{ z }=-\frac{\Psi(z)\left(\sqrt{\wp^2+|\nabla \omega|^2}+\wp\right)}{ 2 \bar \partial \omega \wp}$$
and so
$$f_z  f_{\bar z} = \frac{-|\Psi(z)|^2}{4\wp^2 (\partial \omega)^2},$$
and
\begin{equation}\label{prima}4\mathrm{Im}(f_z  f_{\bar z}) =\frac{-8|\Psi(z)|^2}{\wp^2}\frac{\omega_u \omega_v}{ |\nabla \omega|^4}\end{equation}
and
\begin{equation}\label{seconda}4\mathrm{Re}(f_z  f_{\bar z}) =\frac{-4|\Psi(z)|^2}{|z|^2\wp^2}\frac{\omega_u^2- \omega_v^2}{ |\nabla \omega|^4}.\end{equation}
If
$$\omega_u-\imath \omega_v = R e^{\imath s},$$
then  \begin{equation}\begin{split}\label{fpz}|f_z+f_{\bar z}|^2= \frac{|\Psi(z)|^2 \left(\wp^2+ \omega_v^2\right)}{\wp^2 R^2}\end{split}\end{equation}
and \begin{equation}\label{fmz}\begin{split}|f_z-f_{\bar z}|^2&=\frac{|\Psi(z)|^2  \left(\wp^2+\omega_u^2\right)}{\wp^2  R^2}.\end{split}
\end{equation}
Inserting \eqref{fpz},\eqref{fmz}, \eqref{prima} and \eqref{seconda} to \eqref{vendos} we get
%$$|f_z+f_{\bar z}|^2-|f_z-f_{\bar z}|^2=-\frac{\mathbf{c} \left(\wp^2+ \omega_v^2\right)}{\wp^2 r^2 R^2}+\frac{\mathbf{c} \left(\wp^2+ \omega_u^2\right)}{\wp^2 r^2 R^2}$$
\[\begin{split}&(\wp^3+\wp \omega_{v}(w)^2) \omega_{uu}(w)+(\wp^3+\wp \omega_{u}(w)^2) \omega_{vv}(w)-2\omega_{u}(w)  \wp \omega_{v}(w) \omega_{uv}(w)\\&+(\wp_u (w) \omega_u(w)+\wp_{v}(w) \omega_{v}(w)) \left(\omega_{v}(w)^2+\omega_{u}(w)^2\right)=0\end{split}\]
which can be compactly written as \eqref{newmin}.

\end{proof}
\begin{example}
If $f: \A(1/R,R) \onto \Omega$ is a minimizer of the $\wp$ energy, then, by a result of Kalaj and Lamel in \cite{kalajlamel}, it defines a catenoid type minimal surface $$X_1=\{(f_1(z), h_1(z)): h_1(z)= 2\sqrt{|\mathbf{c}|}\log {|z|}: z\in \A(1/R,R)\}$$ or a helicoid type minimal surface
$$X_2=\{(f_1(z), h_2(z)): h_2(z)= 2\sqrt{|\mathbf{c}|}\,\mathrm{arg}z: z\in \A(1/R,R)\}.$$

Those two minimal surfaces are conjugate to each other in the notation of \cite{Laurent}. Namely their Hopf differentials satisfy the relation $$\mathrm{Hopf}(f_1)=-\mathrm{Hopf}(f_2)=\frac{|\mathbf{c}|}{z^2}.$$
\end{example}
Now Proposition~\ref{lemag} has this specific form for minimal surfaces from \cite{kalajlamel}.
\begin{proposition}\label{lemlem}
Assume that $f=a+\imath b:\Omega\to D$ is a diffeomorphism so that $$\wp^2(f(z)) f_z \bar f_z=\frac{\mathbf{c}}{z^2},$$
with $\mathbf{c}\in\mathbb{R}$. Assume that  for $z\in\Omega$, the following function is well-defined
\begin{equation}\label{hareq1}
\omega(z) = \left\{
              \begin{array}{ll}
                2\sqrt{-\mathbf{c}}\log \frac{1}{|f^{-1}(z)|}, & \hbox{if $\mathbf{c}< 0$;} \\
                \hbox{$2\sqrt{\mathbf{c}}\,\mathrm{arg}(f^{-1}(z))$}, & \hbox{if $\mathbf{c}> 0$.}
              \end{array}
            \right.
\end{equation}

Then $\omega$ satisfies the minimal surface equation \eqref{newmin}.
\end{proposition}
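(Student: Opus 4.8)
The plan is to read Proposition~\ref{lemlem} as the explicit instance of Proposition~\ref{lemag} in which the holomorphic quadratic differential is the one produced by the stationary minimizers of Lemma~\ref{ctheory}, namely $\wp^2(f)\,f_z\bar f_z=\mathbf{c}/z^2$. Thus essentially nothing new has to be proved about the minimal surface equation itself: once I exhibit the explicit holomorphic potential $\Phi$ whose real part gives the stated $\omega$, the conclusion \eqref{newmin} is immediate from Proposition~\ref{lemag}. The only genuine work is the (elementary) integration of the square root of the quadratic differential, together with a careful treatment of single-valuedness on the doubly connected domain.

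Concretely, I would first set $\Psi(z)=\sqrt{\mathbf{c}}\,/z$, so that $\Psi^2(z)=\mathbf{c}/z^2$ matches the hypothesis on $\mathrm{Hopf}(f)$ exactly, and then form the antiderivative $\Phi(z)=\int\Psi(z)\,dz=\sqrt{\mathbf{c}}\,\log z$ that appears in Proposition~\ref{lemag}. Splitting $\log z=\log|z|+\imath\arg z$ and separating real and imaginary parts gives
$$2\Re\Phi(z)=2\Re\!\left(\sqrt{\mathbf{c}}\,\log z\right).$$
The two cases of the proposition correspond precisely to the sign of $\mathbf{c}$: when $\sqrt{\mathbf{c}}$ is real the real part is $2\sqrt{\mathbf{c}}\,\log|z|$, whereas when $\sqrt{\mathbf{c}}=\imath\sqrt{|\mathbf{c}|}$ is purely imaginary the real part is $-2\sqrt{|\mathbf{c}|}\,\arg z$; the contribution is therefore routed into $\log|z|$ (the catenoid-type height) or into $\arg z$ (the helicoid-type height) according to whether $\sqrt{\mathbf{c}}$ is real or imaginary, up to the sign and branch conventions fixed in \eqref{hareq1}. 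Substituting the conformal parameter $z=f^{-1}(w)$ then turns $2\Re\Phi$ into exactly the piecewise formula for $\omega$, and invoking Proposition~\ref{lemag} with the holomorphic datum $\Psi=\sqrt{\mathbf{c}}/z$ yields that $\omega$ solves \eqref{newmin}.

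The main obstacle, and the reason the statement carries the clause ``the following function is well-defined,'' is single-valuedness: on the annular domain the potential $\Phi(z)=\sqrt{\mathbf{c}}\,\log z$ is multivalued, since $\int dz/z$ picks up a period $2\pi\imath$ around the hole. In the branch where $2\Re\Phi$ reduces to $\log|z|$ this period contributes nothing, so $\omega$ is automatically a genuine single-valued graph; but in the branch where it reduces to $\arg z$ the period equals $2\pi\sqrt{|\mathbf{c}|}$, so $\omega$ is single-valued only on a simply connected subdomain and the surface is in general a multigraph. This is exactly the point already flagged in the hypothesis ``$\Phi(z)=\int\Phi(z)\,dz$ is well-defined'' of Proposition~\ref{lemag}, which here becomes concrete because $\Psi=\sqrt{\mathbf{c}}/z$ has nonzero residue. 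Hence the one step requiring attention beyond the purely computational reduction is to verify, on the region under consideration, that the chosen branch of $\Phi$ produces a single-valued $\omega$; granting that, Proposition~\ref{lemag} closes the argument verbatim.
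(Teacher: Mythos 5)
Your overall strategy---read Proposition~\ref{lemlem} as the specialization of Proposition~\ref{lemag} to the Hopf differential $\mathbf{c}/z^2$, exhibit an explicit antiderivative $\Phi$, and track single-valuedness on the doubly connected domain---is exactly the paper's intended route: the paper gives no separate proof, introducing the statement with ``Now Proposition~\ref{lemag} has this specific form\dots''. But your execution has a genuine gap: your own arithmetic produces the two cases of \eqref{hareq1} \emph{interchanged}, and the discrepancy cannot be absorbed into ``sign and branch conventions.'' With $\Psi=\sqrt{\mathbf{c}}/z$ you correctly find $2\Re\Phi=2\sqrt{\mathbf{c}}\log|z|$ when $\mathbf{c}>0$ and $2\Re\Phi=-2\sqrt{-\mathbf{c}}\arg z$ when $\mathbf{c}<0$, whereas \eqref{hareq1} asserts the opposite pairing: $\mathbf{c}<0$ gives the catenoid-type $\log\frac{1}{|f^{-1}(z)|}$ and $\mathbf{c}>0$ the helicoid-type $\arg(f^{-1}(z))$. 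No choice of branch of $\sqrt{\mathbf{c}}$ or of $\log$ converts $\log|z|$ into $\arg z$: one is single-valued on the annulus, the other is not. So as written you prove a different (and, in the context of the paper, wrong) statement; indeed Proposition~\ref{p1} shows that minimal graphs between parallel planes have $\mathbf{c}<0$ with height $C+2\sqrt{-\mathbf{c}}\log|f^{-1}(z)|$, consistent with \eqref{hareq1} and inconsistent with your pairing.

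The missing ingredient is a minus sign. By the conformality condition of Subsection~\ref{subsec22}, the height function of a conformal minimal immersion satisfies $h_z^2=-\wp^2(f)\,f_z\bar f_z$, so the correct holomorphic datum here is $\Psi^2=-\mathbf{c}/z^2$: take $\Psi=\sqrt{-\mathbf{c}}/z$ for $\mathbf{c}<0$ (the period of $\Phi$ around the hole is purely imaginary, so $2\Re\Phi=2\sqrt{-\mathbf{c}}\log|z|$ is automatically single-valued) and $\Psi=\imath\sqrt{\mathbf{c}}/z$ for $\mathbf{c}>0$ (real period $2\pi\sqrt{\mathbf{c}}$, which is precisely where the ``well-defined'' hypothesis bites---your single-valuedness discussion is structurally right but inherits the same case swap). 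You were misled by taking the printed hypothesis of Proposition~\ref{lemag}, $\wp^2(f)f_z\bar f_z=\Phi'(z)^2$, at face value: that hypothesis carries a sign slip, visible inside its own proof, since substituting \eqref{fpf} into \eqref{epp} with that sign yields a quadratic in $f_z$ with discriminant proportional to $\wp^2-|\nabla\omega|^2$, while the formulas \eqref{fpz}--\eqref{fmz} actually used there (and the operator in \eqref{newmin}) contain $\sqrt{\wp^2+|\nabla\omega|^2}$, which forces $\wp^2(f)f_z\bar f_z=-\Psi^2$. A correct proof of Proposition~\ref{lemlem} must notice and resolve this---either by invoking $h_z^2=-\mathrm{Hopf}(f)$, or by verifying the divergence equation directly as is done in Example~\ref{exam}---since otherwise the catenoid and helicoid cases come out reversed.
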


\begin{example}\label{exam}
If $\wp=\rho(|w|)$, then from \cite{jlondon}, a radial $\wp-$harmonic mapping $f$ of the annulus $\A(r,1)$ onto $\A(R,1)$, $r,R<1$ is given implicitly by the formula $$f^{-1}(w) = e^{\imath \tau}\exp\int_1^s\frac{\rho(y)}{\sqrt{4\mathbf{c}+y^2\rho^2(y)}}dy, w=se^{\imath \tau},$$ where $$\max_{R\le |y|\le 1} 4\mathbf{c}+y^2\rho^2(y)\ge 0,$$ and $$r=\exp\int_1^R\frac{\rho(y)}{\sqrt{4\mathbf{c}+y^2\rho^2(y)}}dy.$$
Then for $\mathbf{c}<0$ $$\omega(z)=\sqrt{-\mathbf{c}}\log\frac{1}{|f^{-1}(z)|}=\int_{|z|}^1\frac{\rho(y)}{\sqrt{4\mathbf{c}+y^2\rho^2(y)}}dy.$$
Thus after straightforward computation we get
$$\left(\frac{\nabla \omega}{\sqrt{1+\frac{|\nabla \omega(x,y)|^2}{\rho^2(x,y)}}}\right)=\frac{2\sqrt{-\mathbf{c}}}{\bar z}$$ and so that $$\mathrm{div}\frac{2\sqrt{-\mathbf{c}}}{\bar z}=2\sqrt{-\mathbf{c}}\left(\frac{2}{|z|^2}-\frac{2}{|z|^2}\right)=0.$$
\end{example}

\section{Curvature estimates of minimal graphs}
%\subsection{Minimal surfaces bounded by Jordan curves in convex planes}
In \cite{meekshel} Meeks and White considered minimal surfaces in $\mathbb{R}^2\times \mathbb{R}$ bounded by convex curves in parallel planes and give some improvement of some earlier results on this topic by Shiffman \cite{shiffman}. This result has its counterpart to the minimizers \cite{koh1} (a result of  Ngin-Tee Koh).  In this section we consider some curvature estimates of minimal graphs bounded by Jordan curves in parallel planes in $\mathbb{R}^2\times \mathbb{R}$ and also in $(\Omega,\wp)\times \mathbb{R}$.

In the following proposition, we obtain a  connection between minimizers of energy and minimal graphs bounded by Jordan curves in parallel planes.
\begin{proposition}\label{p1}Assume that $\Sigma$ is a minimal graph over a double connected domain $\Omega$ in the complex plane. Assume also that  $\Sigma$ spans two planar curves $\gamma_1$ and $\gamma_2$ that belongs to two horizontal planes whose distance is $\mathbf{d}$.
\begin{enumerate}
\item
Then there is a conformal minimal parametrisation  $\chi: \A(1/R,R)\to \Sigma$ where $R>1$. In this case $$\Mod(\Sigma)=\Mod(\A(1/R,R))=\log R^2.$$
\item The mapping $f(z) = \chi_1(z)+\imath \chi_2(z)$ is a harmonic diffeomorphism between $A=\A(1/R,R)$ and $\Omega$ with $\mathrm{Mod}(A)>\mathrm{Mod}(\Omega)$.
\item
 Then there is a constant $\mathbf{c}<0$ so that $\Sigma$ is the graphic of a real function $u:\Omega\to \mathbb{R}$, of the form $u(z) =C+ 2\sqrt{-\mathbf{c}}\log |f^{-1}(z)|$, which satisfies the equation \eqref{newmin1} above where $\wp\equiv 1$. Moreover, $f_z\cdot \bar f_z = \frac{\mathbf{c}}{z^2},$ where \begin{equation}\label{ccc}\mathbf{c}=-\frac{\mathbf{d}^2}{4\mathrm{Mod}(\Sigma)^2}.\end{equation}
\end{enumerate}
\end{proposition}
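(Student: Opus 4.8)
The plan is to build the conformal (isothermal) parametrization of $\Sigma$ first and then read off all three assertions from two standard ingredients: that a conformal minimal immersion into $\mathbb R^2\times\mathbb R$ has Euclidean harmonic coordinates satisfying $f_z\bar f_z=-h_z^2$, and that the height coordinate is forced to be radial. For part (1), since $\Sigma$ is a graph over the doubly connected domain $\Omega$ it is itself conformally a planar annulus of finite modulus; uniformization of doubly connected surfaces supplies a conformal diffeomorphism from a round annulus, which after the normalization making the inner and outer radii reciprocal I take to be $\A(1/R,R)$ with $R>1$. Because a minimal surface admits an isothermal parametrization, this map $\chi\colon\A(1/R,R)\to\Sigma$ is a conformal minimal immersion, and by definition $\Mod(\Sigma)=\Mod(\A(1/R,R))=\log R^2$.

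For part (2) I would invoke Subsection~\ref{subsec22} with $\wp\equiv1$: the coordinates of a conformal minimal immersion are Euclidean harmonic, so with $f:=\chi_1+\imath\chi_2$ and $h:=\chi_3$ the map $f$ is harmonic, and the conformality relation there becomes
\[
f_z\bar f_z=-h_z^2 .
\]
Since $\Sigma$ is a graph over $\Omega$, the vertical projection $(\chi_1,\chi_2)$ is a homeomorphism of $\Sigma$ onto $\Omega$; composing with $\chi$ shows that $f\colon\A(1/R,R)\to\Omega$ is a harmonic homeomorphism, and being sense-preserving with nonvanishing Jacobian it is a diffeomorphism.

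The crux is to show $h$ is radial. As $\gamma_1,\gamma_2$ lie in horizontal planes, after normalizing the additive constant one has $h\equiv0$ on one boundary circle and $h\equiv\mathbf d$ on the other; uniqueness for the Dirichlet problem then forces $h(w)=a\log|w|+b$ with $|a|\,\Mod(\Sigma)=\mathbf d$. Hence $h_z=a/(2z)$ and
\[
f_z\bar f_z=-h_z^2=-\frac{a^2}{4z^2}=\frac{\mathbf c}{z^2},\qquad \mathbf c=-\frac{a^2}{4}=-\frac{\mathbf d^2}{4\,\Mod(\Sigma)^2}<0,
\]
which gives the Hopf-differential identity and formula~\eqref{ccc}. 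Having $f_z\bar f_z=\mathbf c/z^2$ with $f$ a diffeomorphism makes $f$ a stationary, hence minimizing, map, so the sign dichotomy of Lemma~\ref{ctheory} applies and $\mathbf c<0$ yields $\Mod(\A(1/R,R))>\Mod(\Omega)$, finishing part (2). For part (3) I set $u:=h\circ f^{-1}$, the defining height of the graph; substituting $h(w)=a\log|w|+b$ and $a=2\sqrt{-\mathbf c}$ gives $u(z)=C+2\sqrt{-\mathbf c}\,\log|f^{-1}(z)|$, and that $u$ solves \eqref{newmin1} with $\wp\equiv1$ is exactly Proposition~\ref{lemag} (equivalently Proposition~\ref{lemlem}) applied to the Hopf differential $\mathbf c/z^2$.

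I expect the one genuinely nontrivial step to be the \emph{strict} inequality $\Mod(\A(1/R,R))>\Mod(\Omega)$: the direct computation only delivers $\mathbf c<0$, and converting this into the modulus inequality requires recognizing $f$ as a minimizer so as to invoke the dichotomy in Lemma~\ref{ctheory}. A secondary technical point is the boundary regularity that lets one assert $h$ takes constant values on $\partial\A(1/R,R)$ --- the step underlying the radial conclusion and everything downstream --- which I would secure through the continuous (Carath\'eodory-type) extension of $\chi$ to the Jordan boundary curves $\gamma_1,\gamma_2$.
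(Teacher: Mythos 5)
Your proposal is correct and follows essentially the same route as the paper's own proof: uniformization of the doubly connected surface for (1), harmonicity of the coordinates and the graph projection for (2), the Dirichlet-uniqueness argument forcing $h(z)=a\log|z|+b$ (the paper's \eqref{v3h}), the conformality relation yielding $f_z\bar f_z=\mathbf{c}/z^2$ with \eqref{ccc}, Lemma~\ref{ctheory} for the strict modulus inequality, and Proposition~\ref{lemlem} for item (3). Your two added remarks---justifying stationarity of $f$ before invoking Lemma~\ref{ctheory}, and the boundary regularity needed for the constant boundary values of $h$---merely make explicit steps the paper leaves implicit.
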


\begin{remark}
The question arises for which Jordan curves $\gamma_1$ and $\gamma_2$ lying to different parallel planes, there is a minimal graph spanning them. A more general question is whether exist a minimal surface spanning $\gamma_1$ and $\gamma_2$. By a classical result of Courant, if $m(\gamma_i)$ is the area of the domain inside $\gamma_i$,  $i=1,2$ and $m(\gamma_1,\gamma_2)$ is the least area of all surfaces spanning $\gamma_1$ and $\gamma_2$, then the condition $m(\gamma_1,\gamma_2)< m(\gamma_1)+m(\gamma_2)$ implies that there exist a minimal doubly connected surface spanning $\gamma_1$ and $\gamma_2$. If the projection of such surface in the plane $P$ parallel to both $\gamma_1, \gamma_2$ is a doubly connected domain $\Omega$, then we conjecture that there is a curve $\gamma_0\subset \Sigma$ parallel to $\gamma_1, \gamma_2$ so that $\Sigma(\gamma_1,\gamma_0)$ and $\Sigma (\gamma_2,\gamma_0)$ are minimal graphs, one of which, denote it by $\Sigma'$,  has a projection the whole domain $\Omega$. In that case $$\Mod(\Omega)<\Mod(\Sigma')<\Mod(\Sigma).$$ Moreover, such a setting gives rise to a harmonic diffeomorphism $f$ of the annulus $\A(1/R, R)$ onto $\Omega$, where

\begin{equation}\label{ineq}\Mod(\A(1/R,R))>\Mod(\Omega).\end{equation}

We mention here that when \begin{equation}\label{opineq}\Mod(\A(1/R,R))\le \Mod(\Omega)\end{equation} the existence of harmonic diffeomorphisms  between $\A(1/R,R)$ and $\Omega$  has been established in \cite{calculus1}.
We refer to \cite{koli} where  Kovalev and Li considered the existence of harmonic diffeomorphisms under the condition \eqref{ineq}, under some smoothness conditions on the boundaries and some constraint on their conformal modulus. The general problem of determining the conditions under which there is a harmonic diffeomorphism with given constraints  \eqref{ineq} on the annuli remains open.
\end{remark}
\begin{proof}
The item a) follows from the fact that $\Sigma$ is smooth double connected surface and therefore it exist a conformal mapping $\chi=(\chi_1,\chi_2, \chi_3)$ of an annulus $\A(1/R,R)$ onto $\Sigma$ by virtue of theorem \cite[Theorem~3.1]{jostconf} see also \cite[Lemma~2.2]{meekshel}. Moreover, all coordinates of $\chi$ are harmonic. Since $\Sigma$ is a graph of a real function $u$ defined in a double connected domain $\Omega$, it follows that $\Sigma =\{(x,y,u(x,y)):(x,y)\in\Omega\}$. Therefore the mapping $f(z) =\chi_1(z) +\imath \chi_2(z)$ is a diffeomorphism as the composition of the following two diffeomorphisms $\chi$ and $(x,y, u(x,y))\to (x,y)$.
Further $\chi_3(z)=u(f(z))$ and $\chi_3(1/R e^{\imath t})=\mathbf{h}$ and $\chi_3(R e^{\imath t})=\mathbf{h}+\mathbf{d}$. Since $\chi_3$ is harmonic in $\A(1/R,R)$ it has the following form \begin{equation}\label{v3h}\chi_3(z)=h(z)=\frac{\mathbf{d}}{2}+\mathbf{h}+\frac{\mathbf{d} }{\log \left[R^2\right]}\log |z|.\end{equation}
Thus by conformal condition we have $$(f_z(z)+\bar{f}_z(z))^2+\left(\imath (f_z(z)-\bar{f}_z(z))\right)^2+ \left(\frac{\mathbf{d} }{z \log \left[R^2\right]}\right)^2=0.$$
Therefore, $$f_z(z)\bar{f}_z(z)=-\left(\frac{\mathbf{d} }{2z \log \left[R^2\right]}\right)^2=\frac{\mathbf{c}}{z^2},$$ where $$\mathbf{c}=-\frac{\mathbf{d}^2}{4\mathrm{Mod}(\Sigma)^2}.$$
Now Lemma~\ref{ctheory} implies that if $\mathbf{c}<0$ then $\Mod A\ge \Mod\Omega$. But $\Mod A=\Mod\Omega$ is not possible, because in that case $f$ is conformal, and thus $\mathbf{c}=0$ which is also not possible. So $\Mod A>\Mod\Omega$. The item (3) follows from Lemma~\ref{lemlem}.
\end{proof}
Similarly we can prove the following proposition
\begin{proposition}\label{p2}Assume that $\Sigma\subset (\Omega,\wp)\times \mathbb{R}$ is a minimal graph over a double connected domain $\Omega$ in the complex plane. Assume also that  $\Sigma$ spans two planar curves $\gamma_1$ and $\gamma_2$ that belongs to two horizontal planes whose distance is $\mathbf{d}$.
\begin{enumerate}
\item
Then there is a conformal minimal parametrisation $\chi: \A(1/R,R)\to \Sigma$ where $R>1$. In this case $$\Mod(\Sigma)=\Mod(\A(1/R,R))=\log R^2.$$
\item The mapping $f(z) = \chi_1(z)+\imath \chi_2(z)$ is a $\wp-$harmonic diffeomorphism between $A=\A(1/R,R)$ and $\Omega$ with $\mathrm{Mod}(A)>\mathrm{Mod}(\Omega)$.
\item
 Then there is a constant $\mathbf{c}<0$ so that $\Sigma$ is the graphic of a real function $u:\Omega\to \mathbb{R}$, of the form $u(z) =C+ 2\sqrt{-\mathbf{c}}\log |f^{-1}(z)|$ satisfying  equation \eqref{newmin1} above. Moreover, $\wp^2(f(z))f_z\cdot \bar f_z = \frac{\mathbf{c}}{z^2},$ where $\mathbf{c}$ is given by \eqref{ccc}. \end{enumerate}
\end{proposition}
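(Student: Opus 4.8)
The plan is to transcribe the proof of Proposition~\ref{p1} into the Riemannian setting, replacing the Euclidean harmonicity of the coordinate functions by the structure of conformal minimal immersions recorded in Subsection~\ref{subsec22}. First I would invoke the same existence theorem for conformal parametrizations of smooth doubly connected minimal surfaces (as in the proof of item (a) of Proposition~\ref{p1}) to produce a parametrization $\chi\colon\A(1/R,R)\to\Sigma$ that is conformal with respect to the product metric $d\hbar^2=\wp^2|dz|^2+dt^2$; this gives $\Mod(\Sigma)=\Mod(\A(1/R,R))=\log R^2$ and hence item (1). Writing $\chi=(f,h)$ with $f=\chi_1+\imath\chi_2$ the $\Omega$-valued part and $h=\chi_3$ the height, the key structural fact is that a conformal harmonic map into a Riemannian product splits into harmonic maps into the two factors: thus $f$ is $\wp$-harmonic into $(\Omega,\wp)$ and $h$ is Euclidean harmonic in $z$. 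Since $\Sigma$ is a graph over $\Omega$, the map $f$ is a diffeomorphism, being the composition of $\chi$ with the projection $(x,y,t)\mapsto(x,y)$, which yields the diffeomorphism assertion of item (2).

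Next I would pin down the height. The two boundary circles of the annulus are sent to the two horizontal planes, so $h\equiv\mathbf{h}$ on $|z|=1/R$ and $h\equiv\mathbf{h}+\mathbf{d}$ on $|z|=R$; a Euclidean harmonic function on $\A(1/R,R)$ with these constant boundary values must be radial and of the form $h(z)=\frac{\mathbf{d}}{2}+\mathbf{h}+\frac{\mathbf{d}}{\log R^2}\log|z|$, exactly as in \eqref{v3h}. Consequently $h_z=\frac{\mathbf{d}}{2z\log R^2}$. Feeding this into the conformal minimal relation of Subsection~\ref{subsec22}, namely $(h_z)^2=-\wp^2(f(z))\,f_z\bar f_z$, gives at once $\wp^2(f(z))\,f_z\bar f_z=-(h_z)^2=\mathbf{c}/z^2$ with $\mathbf{c}=-\mathbf{d}^2/(4\Mod(\Sigma)^2)$, which is the Hopf-differential identity in item (3) and formula \eqref{ccc}.

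Since $\mathbf{d}>0$ we have $\mathbf{c}<0$. The holomorphic Hopf differential $\wp^2(f)f_z\bar f_z=\mathbf{c}/z^2$ identifies $f$ as a stationary (Noether) $\wp$-harmonic diffeomorphism of the annulus, so Lemma~\ref{ctheory} applies and its sign dichotomy forces $\Mod(A)>\Mod(\Omega)$; equality is excluded because it would make $f$ conformal and hence $\mathbf{c}=0$, contradicting $\mathbf{c}<0$. This completes item (2). Finally, having the Hopf differential in the form $\mathbf{c}/z^2$ with $\mathbf{c}<0$, Proposition~\ref{lemlem} delivers the explicit height $u(z)=C+2\sqrt{-\mathbf{c}}\log|f^{-1}(z)|$ and verifies that it solves \eqref{newmin1}, giving item (3).

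The main obstacle is the very first step: ensuring that the conformal-parametrization theorem used for the Euclidean case remains valid when the ambient space carries the product metric $d\hbar^2$, and that the resulting map genuinely splits into a $\wp$-harmonic planar component and a Euclidean-harmonic height. I would check that the cited uniformization/conformal-parametrization result is stated for general two-dimensional targets and that the first variation of the induced $\wp$-area indeed reduces the conformality condition precisely to $(h_z)^2=-\wp^2(f)f_z\bar f_z$ rather than to some $\wp$-dependent modification; once these two points are secured, the remainder of the argument is a routine transcription of the proof of Proposition~\ref{p1}.
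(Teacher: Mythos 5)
Your proposal is correct and follows essentially the same route as the paper, which itself proves Proposition~\ref{p2} only by remarking that it goes ``similarly'' to Proposition~\ref{p1}: conformal parametrization via the uniformization result, the explicit radial height \eqref{v3h}, the conformality relation $(h_z)^2=-\wp^2(f)f_z\bar f_z$ from Subsection~\ref{subsec22} yielding the Hopf differential $\mathbf{c}/z^2$ with \eqref{ccc}, the sign dichotomy of Lemma~\ref{ctheory} giving $\Mod(A)>\Mod(\Omega)$, and Proposition~\ref{lemlem} for item (3). You even silently correct the paper's typo in Subsection~\ref{subsec22} (where $(\wp\circ h)^2$ should read $(\wp\circ f)^2$), and your flagged verification points are exactly the right ones.
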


%\begin{remark}
%Every harmonic mapping of the annulus $A(1/R, R)$ into $\mathbb{C}$ can be expressed (see e.g. \cite[Theorem~9.1.7]{abr}) as \begin{equation}\label{logar}f(z) = %a_0 \log |z|+b_0 + \sum_{k\ne 0} (a_k z^k + \overline{b_k} \bar z^k).\end{equation}

%So $$f(z) =  a_0 \log |z|+b_0+g(z) + \overline{h(z)},$$ where $g$ and $h$ are holomorphic functions in the annulus.  Moreover
%$$f_z  =g'(z)+\frac{a_0}{2z}$$ and

%$$\bar f_z  = h'(z)+\frac{\overline{a_0}}{2z}.$$
%\end{remark}
\subsection{Gaussian curvature}
Assume that $\Sigma=\{(f(z), h(z)): z\in\Omega\}$ is a minimal surface in the manifold $M=(\Omega, \wp) \times \mathbb{R}$.
From a corresponding formula in \cite{Laurent} we get
$$ds^2_\Sigma=\wp(f(z))^2(|f_z|+|f_{\bar z}|)^2|dz|^2=\frac{1}{4}\left(|\mu|+|\mu|^{-1}\right)^2 |\eta|^2|dz|^2=\lambda|dz|^2,$$
where
$$\eta=\pm 2\rho(f(z))\sqrt{f_z\bar f_z}$$ and
$$\mu = \sqrt{\left|\frac{\bar f_{z}}{ f_z}\right|}.$$
Let
$m=\mu^2.$
Bochner formula implies that
$$\Delta \log m = \mathcal{K}_\wp J_f,$$ where
$$J(f)=|\partial_\wp f|^2 - |\bar \partial_\wp f|^2=\wp(f(z))^2(|f_z|^2-|f_{\bar z}|^2).$$
Then the Gaussian curvature of $\Sigma$ at $\zeta=\chi(z)=(f(z), h(z))$ can be expressed as
$$\mathcal{K}=\mathcal{K}_\Sigma(\zeta)=-\frac{\vartriangle \log \lambda(z)}{2\lambda(z)}.$$
Thus
\begin{equation}\label{curva}\mathcal{K}=\frac{-\Delta \log \lambda}{2\lambda}=\frac{-2 A+X \mu^2  \left(1-\mu^4 \right)}{2\lambda  \mu^2 (1+\mu^2)^2}\end{equation}
where
$$A=|\nabla m|^2$$
and
$$X=2\Delta \log m =2 \mathcal{K}_{\wp} J(f).$$
So $$\mathcal{K}=\frac{- A}{\lambda  \mu^2 (1+\mu^2)^2}+\frac{\mathcal{K}_{\wp} J(f)  \left(1-\mu^4 \right)}{\lambda  (1+\mu^2)^2}.$$
Observe that, if $\wp$ has negative Gaussian curvature, then $\Sigma$ is also negatively curved minimal surface.
Furthermore
$$0\le \frac{J(f)  \left(1-\mu^4 \right)}{\lambda  (1+\mu^2)^2}=\frac{ \left(1-\mu^2\right)^2 }{ \left(1+\mu^2\right)^3} \le 1 .$$
Further $$ \frac{- A}{\lambda  \mu^2 (1+\mu^2)^2}=\frac{- B }{\wp^2(f(z))(|f_z|+|f_{\bar z}|)^2 \left(1+\mu^2\right)^4},$$ where \begin{equation}\label{B}B=4|\nabla \mu|^2.\end{equation}

Thus we get \begin{equation}\label{kal}|\mathcal{K}|\le \max\left\{\left|\frac{- B }{\wp^2(f(z))(|f_z|+|f_{\bar z}|)^2 }\right|,\left|\frac{- B }{\wp^2(f(z))(|f_z|+|f_{\bar z}|)^2 }+\mathcal{K}_\wp\right|\right\}.\end{equation}

By \cite[Proposition~4]{ili}, see \eqref{nn} above, we know that a conformal parametrization $\chi(z) = (f(z), h(z))$ of a minimal surface $\Sigma\subset M\times \mathbb{R}$ defines the unit normal at a given point. Moreover, if $\bar\partial f(0)=0$, then the unit normal $N$ at a point $\chi(0)=o$  is  $N=(0,0,1)$ and it defines a horizontal tangential plane. Now we formulate the following Finn-Osserman type result (\cite{FinnOsserman1964}):

\begin{theorem}\label{th1}
 Let $D_\wp(o, R)\subset M_0= (\Omega,\wp)$ be a  geodesic disk so that $0\le \mathcal{K}_\wp\le \kappa^2<\frac{\pi^2}{4R^2}$. Assume further that $D_\wp(o,R)$ is disjoint from the cut-locus of $o$.  Assume that $\Sigma$ is a minimal surface over  $D_\wp(o, R)$ in the Riemannian manifold $M_0\times \mathbb{R}$. Then at the point $O$ above $o$, we have the following Gaussian curvature estimate of $\Sigma$
$$|\mathcal{K}_\Sigma(O)|\le\frac{4C_\kappa^\ast}{R^2c_k^2},$$ where $c_\kappa$ is defined in \eqref{ck} and $C_\kappa^\ast\ge 1$ is defined in \eqref{csk}.
If the tangential plane at $O$ is horizontal, then we can choose $C_\kappa^\ast=1$.
\end{theorem}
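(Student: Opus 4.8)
The plan is to read off the Gaussian curvature at the center $O=\chi(0)$ from the general bound \eqref{kal} and to control its two ingredients by the Schwarz–type estimates of Theorems~\ref{teo1} and~\ref{teo2}. First I would fix a conformal minimal parametrization $\chi(z)=(f(z),h(z))\colon\ID\to\Sigma$ with $\chi(0)=O$; since $\Sigma$ is a graph over the simply connected geodesic disk $D_\wp(o,R)$, the projection $f=\chi_1+\imath\chi_2\colon\ID\to D_\wp(o,R)$ is an orientation–preserving $\wp$–harmonic diffeomorphism with $f(0)=o$, and its dilatation $\mu=\sqrt{|f_{\bar z}/f_z|}$ is well defined, with the isolated–zero structure granted by Proposition~\ref{prop2}. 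Evaluating \eqref{kal} at $z=0$, the problem reduces to an upper bound for $B(0)=4|\nabla\mu(0)|^2$ and a lower bound for $\wp^2(o)(|f_z(0)|+|f_{\bar z}(0)|)^2\ge|\partial f(0)|^2=\wp^2(o)|f_z(0)|^2$.

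Next I would pass to the unit geodesic disk by rescaling $\wp\mapsto\tilde\wp=\wp/R$. This leaves equation \eqref{el} invariant, because $(\log\wp^2)_w$ is unchanged when $\wp$ is multiplied by a constant; hence $f$ is also $\tilde\wp$–harmonic and, crucially, $\mu$ and $\nabla\mu$ are unchanged. At the same time $D_\wp(o,R)=D_{\tilde\wp}(o,1)$ while $\mathcal{K}_{\tilde\wp}=R^2\mathcal{K}_\wp$, so $0\le\mathcal{K}_{\tilde\wp}\le\tilde\kappa^2$ with $\tilde\kappa=R\kappa<\pi/2$ (this is the content of Remark~\ref{vere}, cf. \eqref{cuca}). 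Thus $f$ meets all hypotheses of Theorems~\ref{teo1} and~\ref{teo2} on the unit disk for the curvature bound $\tilde\kappa$.

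For the numerator I would distinguish the two geometric cases recorded after \eqref{kal}. By \eqref{nn} the tangent plane at $O$ is horizontal exactly when $\bar\partial f(0)=0$, i.e. $\mu(0)=0$; in that case the sharp estimate \eqref{numa} of Theorem~\ref{teo1} gives $|\nabla\mu(0)|\le1$, so one may take $C^\ast_\kappa=1$. When the tangent plane is not horizontal, $\mu(0)\neq0$ and \eqref{mumu} (with the constant \eqref{csk} for the rescaled bound $R\kappa<\pi/2$) gives $|\nabla\mu(0)|\le C^\ast_\kappa$, whence $B(0)\le4(C^\ast_\kappa)^2$; because $\mu$ is scale–invariant, no factor of $R$ enters here. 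For the denominator, $f$ is a $\tilde\wp$–harmonic diffeomorphism of $\ID$ onto $D_{\tilde\wp}(o,1)$ with nonnegative curvature, so Theorem~\ref{teo2}(b) in its scaled form \eqref{maybe1} yields $\wp(o)|f_z(0)|\ge R\,c_\kappa$, and therefore $\wp^2(o)(|f_z(0)|+|f_{\bar z}(0)|)^2\ge R^2c_\kappa^2$.

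Inserting these into \eqref{kal} bounds the first entry of the maximum by $4(C^\ast_\kappa)^2/(R^2c_\kappa^2)$. For the second entry I would use $0\le\mathcal{K}_\wp(o)\le\kappa^2<\pi^2/(4R^2)$ together with the same bound on the first term, so that $|{-B}/(\wp^2(|f_z|+|f_{\bar z}|)^2)+\mathcal{K}_\wp(o)|$ is again at most $4(C^\ast_\kappa)^2/(R^2c_\kappa^2)$; this gives $|\mathcal{K}_\Sigma(O)|\le 4(C^\ast_\kappa)^2/(R^2c_\kappa^2)$, and $4/(R^2c_\kappa^2)$ in the horizontal case. The step I expect to demand the most care is not any single estimate — all are supplied by Theorems~\ref{teo1} and~\ref{teo2} — but the bookkeeping around them: confirming that $f$ is genuinely an orientation–preserving diffeomorphism onto the full disk $D_\wp(o,R)$ (so that \eqref{maybe} applies), tracking the rescaling $\kappa\mapsto R\kappa$ consistently through both constants $c$ and $C^\ast$ while noting that $\mu$ carries no $R$, and verifying that under the curvature hypothesis the first term of the maximum in \eqref{kal} dominates the term $\mathcal{K}_\wp(o)$, so that the maximum collapses to the asserted bound.
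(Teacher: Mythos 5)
Your proposal follows essentially the same route as the paper's proof: evaluate \eqref{kal} at the point above $o$, bound $B=4|\nabla\mu|^2$ via Theorem~\ref{teo1} (taking $C_\kappa^\ast=1$ via \eqref{numa} when the tangent plane at $O$ is horizontal, i.e. $\mu(0)=0$), bound the denominator from below via Theorem~\ref{teo2}(b) in its rescaled form \eqref{maybe1}, and absorb the $\mathcal{K}_\wp$-term using $\kappa^2<\pi^2/(4R^2)$. Two fine points deserve mention. First, your bookkeeping produces $4(C_\kappa^\ast)^2/(R^2c_\kappa^2)$, which is what \eqref{mumu} actually delivers, since $|\nabla\mu(0)|\le C_\kappa^\ast$ gives $B(0)\le 4(C_\kappa^\ast)^2$; the paper instead asserts $|B|\le 4C_\kappa^\ast$, which does not follow from Theorem~\ref{teo1} as stated (unless $C_\kappa^\ast=1$), so the mismatch between your constant and the theorem's $4C_\kappa^\ast$ reflects a slip in the paper's own proof rather than an error on your side. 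Second, the absorption step that you flag as needing verification but do not carry out is handled in the paper by the inequality $c_\kappa\le 2/\pi$ (drawn from Remark~\ref{vere}), which yields $\mathcal{K}_\wp(o)<\pi^2/(4R^2)<1/(R^2c_\kappa^2)\le 4C_\kappa^\ast/(R^2c_\kappa^2)$; with your squared constant the cheaper observation $c_\kappa\le c_0\le 1$ suffices (the flat metric with the identity map belongs to the competing class in \eqref{ck}, and $\kappa\mapsto c_\kappa$ is non-increasing by Theorem~\ref{teo2}(b)), since then $\pi^2c_\kappa^2/16<1\le (C_\kappa^\ast)^2$, and the opposite signs of the two terms in the second entry of the maximum give $|{-X}+\mathcal{K}_\wp|\le\max\{X,\mathcal{K}_\wp\}$ as you implicitly use. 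With that one line supplied, your argument is complete and coincides with the paper's in all essentials, including the rescaling $\kappa\mapsto R\kappa$ and the scale-invariance of $\mu$.
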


\begin{proof}

Observe that, from Remark~\ref{vere} we have that  $c_\kappa\le 2/\pi$. Further, having in mind Theorem~\ref{teo1} we have $|B|\le 4C_\kappa^\ast$, where $B$ is defined in \eqref{B}. Now by \eqref{kal} and Theorem~\ref{teo2} (equation \eqref{maybe1}) we get $|\mathcal{K}|\le \frac{4C_\kappa^\ast}{R^2c_k^2}$, because $\frac{4}{c_k^2}> \pi^2$ and $\mathcal{K}_\wp<\pi^2/(4R^2)$.
\end{proof}
\subsection{The curvature estimate of minimal graphs between parallel planes}
In this case, as it is discussed in Propositions~\ref{p1} and \ref{p2} we have
$$\eta=\pm 2\rho(f(z))\sqrt{f_z\bar f_z}=\pm 2\imath \sqrt{\frac{\mathbf{c}}{z^2}}.$$
For our surface $$ \mathcal{K} = \frac{- A |z|^2}{|c| \left(1+\mu^2\right)^4}+\frac{\mathcal{K}_{\wp} J(f)  \left(1-\mu^4 \right)}{2\lambda  (1+\mu^2)^2}.$$
If $\wp\equiv 1$, then $\mathcal{K}_\wp=0$ and $$\frac{-\Delta \log \lambda}{\lambda}=\frac{- A}{2\lambda  \mu^2 (1+\mu^2)^2} .$$
In this case $m=|\mathbf{q}(z)|^2$, where $\mathbf{q}$ is a holomorphic function so that $|\mathbf{q}(z)|<1$,  and so $$|\nabla m|^2=4|\mathbf{q}(z)|^2\cdot |\mathbf{q}'(z)|^2.$$ Moreover,
$$\lambda =(1+|\mathbf{q}|^2)^2 |\mathbf{p}|^2.$$
In this case we get $$\mathcal{K}=-\frac{4|\mathbf{q}'|^2}{|\mathbf{p}|^2(1+|\mathbf{q}|^2)^4}.$$
%For $\mathbf{c}>0$, we consider the modulus of $\Sigma$ projection.
We also have for $\omega=\mathbf{q}^2$, (\cite[p.~184]{Duren2004}) \begin{equation}\label{exp}\mathcal{K}=-\frac{|\omega'|^2}{|f_z \bar f_z|(1+|\omega|)^4}.\end{equation}

Now we have the make the following equivalent formulation of Theorem~\ref{exam1}
\begin{theorem}\label{apo}

Assume that $\Sigma\subset \mathbb{R}^2\times \mathbb{R}$ is a double-connected minimal graph bounded by two Jordan curves in parallel planes $z=\pm \mathbf{d}/2.$ Then for every point $\zeta \in  \Sigma\bigcap \mathbb{R}^2\times \{0\}$ we have $$|\mathcal{K}(\zeta)|\le \frac{\pi^2}{\mathbf{d}^2}.$$

Moreover, the tangential plane $T\Sigma_\zeta$ at a point $\zeta\in \Sigma\setminus \mathbb{R}^2\times \{-\mathbf{d}/2,\mathbf{d/2}\} $ is never horizontal nor vertical.

\end{theorem}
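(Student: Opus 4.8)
The plan is to pass to the conformal annular model of Proposition~\ref{p1} and reduce the curvature bound to a Schwarz--Pick estimate for the dilatation evaluated on the core circle of the annulus.

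First I would apply Proposition~\ref{p1} to obtain a conformal minimal parametrization $\chi\colon\A(1/R,R)\to\Sigma$ with $\Mod(\Sigma)=\log R^2=2\log R$, the sense-preserving harmonic diffeomorphism $f=\chi_1+\imath\chi_2\colon\A(1/R,R)\to\Omega$ with Hopf differential $f_z\overline{f_{\bar z}}=\mathbf{c}/z^2$, and height $\chi_3(z)=\frac{\mathbf d}{\log R^2}\log|z|$ as in \eqref{v3h}, where $\mathbf c=-\mathbf d^2/(4\Mod(\Sigma)^2)<0$. Since the two planes are $z=\pm\mathbf d/2$, the equidistant plane $z=0$ is the $\chi$-image of exactly the core circle $|z|=1$, so it suffices to estimate $|\mathcal K|$ at $\chi(z)$ with $|z|=1$. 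Setting $\omega=\overline{f_{\bar z}}/f_z$ for the holomorphic dilatation (the function denoted $\mathbf q^2$ in \eqref{exp}), the curvature formula \eqref{exp} reads $|\mathcal K|=|\omega'|^2\big/\big(|f_z\overline{f_{\bar z}}|(1+|\omega|)^4\big)$; using $|f_z\overline{f_{\bar z}}|=|\mathbf c|=\mathbf d^2/(16(\log R)^2)$ on $|z|=1$, the target inequality $|\mathcal K|\le\pi^2/\mathbf d^2$ is equivalent to
$$
\frac{4\log R\,|\omega'(z)|}{(1+|\omega(z)|)^2}\le\pi,\qquad |z|=1 .
$$

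The key step is a Schwarz--Pick estimate. As $f$ is a sense-preserving diffeomorphism, $f_z$ never vanishes and $|f_{\bar z}|<|f_z|$, so $\omega$ is a holomorphic self-map $\omega\colon\A(1/R,R)\to\mathbb D$, and the Ahlfors--Schwarz lemma gives $|\omega'(z)|/(1-|\omega(z)|^2)\le\lambda_{\A}(z)$, where $\lambda_{\A}$ is the hyperbolic density of $\A(1/R,R)$ normalized to equal $1/(1-|z|^2)$ on $\mathbb D$. The main computation, which I expect to be the delicate point, is to evaluate $\lambda_{\A}$ on the core circle: lifting through the universal covering $w\mapsto R^{\,w}$ onto the strip $\{|\re w|<1\}$ and uniformizing that strip by $w\mapsto\tan(\pi w/4)$, one finds the hyperbolic density of the strip along its central line to be the constant $\pi/4$; pushing this down through the covering yields $\lambda_{\A}(z)=\pi/(4\log R)$ for $|z|=1$. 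It is exactly this value $\pi/4$ that produces the sharp factor $\pi$ in the statement, so getting the density and its normalization right is the crux.

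Combining the two displays, on $|z|=1$ we have $|\omega'|\le\frac{\pi}{4\log R}(1-|\omega|^2)$, whence
$$
\frac{4\log R\,|\omega'|}{(1+|\omega|)^2}\le\pi\,\frac{(1-|\omega|)(1+|\omega|)}{(1+|\omega|)^2}=\pi\,\frac{1-|\omega|}{1+|\omega|}\le\pi,
$$
which gives $|\mathcal K(\zeta)|\le\pi^2/\mathbf d^2$ on the equator. For the final assertion I would read off from the normal formula \eqref{nn} (with $\wp\equiv1$) that the tangent plane is horizontal only where $f_z=0$ or $f_{\bar z}=0$, and vertical only where $|f_z|=|f_{\bar z}|$, i.e.\ where $J_f=0$. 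Neither occurs on $\A(1/R,R)$: the Hopf differential $f_z\overline{f_{\bar z}}=\mathbf c/z^2$ is nowhere zero, so both $f_z$ and $f_{\bar z}$ are nonvanishing, while $J_f=|f_z|^2-|f_{\bar z}|^2>0$ because $f$ is sense-preserving. Since $\A(1/R,R)$ is the $\chi$-image of $\Sigma\setminus(\gamma_1\cup\gamma_2)=\Sigma\setminus(\mathbb R^2\times\{\pm\mathbf d/2\})$, the tangent plane is there never horizontal nor vertical.
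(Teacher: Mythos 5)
Your proposal is correct and takes essentially the same route as the paper: reduction via Proposition~\ref{p1} to the conformal annulus model with $h(z)=\frac{\mathbf{d}}{\log R^2}\log|z|$, the curvature formula \eqref{exp}, and the Schwarz--Pick inequality for holomorphic maps $\omega:\A(1/R,R)\to\mathbb{D}$ evaluated on the core circle $|z|=1$, where your covering-map computation of the hyperbolic density $\pi/(4\log R)$ reproduces exactly the $|z|=1$ case of the inequality \eqref{idb} that the paper cites from Beardon--Minda. The only minor deviation is cosmetic: you rule out vertical tangent planes directly from $J_f>0$ for the sense-preserving diffeomorphism $f$, whereas the paper applies the maximum principle to $\mathbf{q}$; both arguments are valid.
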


\begin{proof}
Assume that $\Sigma$ is defined over a double connected domain $\Omega$ and assume that $\chi(z)=(f(z), h(z)): \A(1/R, R)\to \Sigma$ is its conformal minimal parameterisation. By \eqref{v3h} $h$ is given by  \begin{equation}\label{specific}h(z)=\frac{\mathbf{d} }{\log \left[R^2\right]}\log |z|.\end{equation}

 Let $\omega(z) = (\mathbf{q}(z))^2$.
From \eqref{exp} we have \begin{equation}\label{curv}\mathcal{K}(\chi(z)))=-\frac{|\omega'(z)|^2}{|f_z\bar f_z|(1+|\omega|)^4}=-\frac{|\omega'(z)|^2|z|^2}{|\mathbf{c}|(1+|\omega|)^4}.\end{equation}
Schwarz-Pick inequality for  analytic functions of the annulus $\A(1/R, R)$ into the unit disk, $R>1$ can be formulated as (\cite{minda})
\begin{equation}\label{idb} \left|\omega'(z)\right|<\frac{\pi  \left(1-\left|\omega(z)\right|^2\right) \sec\left[\frac{\pi  \log |z|}{2 \log R}\right]}{4|z| \log R}.\end{equation}
%if $m: \A(r,1)\to D$ then
%$$|m'(z)|\le \pi (1-|m(z)|^2)\frac{\sec\left[ \frac{\pi \log (s/\sqrt{r})}{\log r}\right]}{2s\log 1/r}$$
(Notice that: For $\omega(z) = z^2/R^2$ the above inequality is sharp for $s=|z|\to R$. For $\omega(z) = 1/(R^2z^2)$ it is sharp for $s=|z|\to 1/R$.)

So $$|\mathcal{K}| \le \frac{|\omega'(z)|^2|z|^2}{|\mathbf{c}|(1+|\omega|)^4}\le  \left(\pi  \frac{(1-|\omega(z)|)}{1+|\omega(z)|}\frac{\sec\left[ \frac{\pi \log |z|}{2\log R}\right]}{4\log R}\right)^2\frac{1}{|\mathbf{c}|}.$$
So for every $\zeta \in \Sigma$ we have
$$|\mathcal{K}(\zeta)| \le \frac{|\omega'(z)|^2|z|^2}{|\mathbf{c}|(1+|\omega|)^4}\le  \left(\pi  \frac{\sec\left[ \frac{\pi \log |z|}{2\log R}\right]}{4\log R}\right)^2\frac{1}{|\mathbf{c}|}.$$
So for $|z|=1$, on account of \eqref{specific}, we have that $\zeta=\chi(z)\in \Sigma\bigcap \mathbb{R}^2\times \{0\}$. For such $z$  we have \begin{equation}\label{wol}|\mathcal{K}(\zeta)|\le  \left( \frac{\pi }{4\log R}\right)^2\frac{1}{|\mathbf{c}|}.\end{equation}
From \eqref{wol} and \eqref{ccc} we obtain
\[\begin{split}|\mathcal{K}(\zeta)|&\le  \left( \frac{\pi }{4\log R}\right)^2\frac{4\mathrm{Mod}(\Sigma)^2}{\mathbf{d}^2}\\&=\left( \frac{\pi }{2\log R^2 }\right)^2\frac{4\mathrm{Mod}(\Sigma)^2}{\mathbf{d}^2}\\&=\frac{\pi^2}{\mathbf{d}^2}.\end{split}\]

To prove the last assertion of the theorem, observe that  $\omega'(z) = 2 \mathbf{q}'(z) \mathbf{q}'(z)$.
Then the unit normal at $\zeta=\chi(z)\in \Sigma$, in view of \cite[p.~169]{Duren2004} or \cite[eq.~2.81]{fors}  is given by $$\mathbf{n}_{\zeta}=-\frac{1}{1+|\mathbf{q}(z)|^2}(2\Im \mathbf{q}(z), 2\Re \mathbf{q}(z), -1+|\mathbf{q}(z)|^2).$$
Further $T\Sigma_\zeta$ is horizontal if and only if $\mathbf{q}(z)=0$ at $\zeta=\chi(z)$. In this case $\omega(z)=0$, but this is impossible because $f_z \bar f_z=c/z^2$ and so $f_z\neq 0$ and $f_{\bar z}\neq 0$. Now the tangent plane $T\Sigma_\zeta$ is vertical at $\zeta=\chi(z)$, if and only if  $|\mathbf{q}(z)|=1$, but then the maximum principle  this implies that $\mathbf{q}\equiv e^{\imath \tau}$. This is not possible.
\end{proof}

\subsection{The curvature of rotationally symmetric minimal surface}\label{subsug} We continue the notation of Example~\eqref{exam}.
Assume that $p: [r,1]\to [R,1]$ is a function defined by $$ s=\exp\int_1^{p(s)}\frac{\rho(y)}{\sqrt{4\mathbf{c}+y^2\rho^2(y)}}dy$$
Define for $\mathbf{c}<0$: $$\chi(z) = (p(s)\cos \theta, p(s) \sin \theta, 2\sqrt{-\mathbf{c}}\log \frac{1}{s})$$ and for $\mathbf{c}>0$: $$\chi(z) = (p(s)\cos \theta, p(s) \sin \theta, 2\sqrt{\mathbf{c}} \theta),$$ where $z=se^{\imath \theta}$. Then $\chi(z)$ is a conformal parametrisations of a corresponding minimal surface $\Sigma_{\mathbf{c}}$. Observe that for $\rho\equiv 1$, i.e. for $\wp-$being an Euclidean metric, the surface $\Sigma_{\mathbf{c}}$ is a catenoid for $\mathbf{c}<0$ and it is a helicoid for $\mathbf{c}>0$.

If we set  $f(z) = p(s) e^{\imath \theta}$, $z=s e^{\imath \theta}$, then $f$ is $\rho-$harmonic mapping between $\A(r,1)$ and $\A(R,1).$ Since $$p(s)^2-(s p'(s))^2= -\frac{4 \mathbf{c}}{\rho(p(s))^2},$$  we get for $\mathbf{c}<0$ that $$\mu_f=\frac{|f_{\bar z}|}{|f_z|}=\frac{p(s)-sp'(s)}{p(s)+p'(s)}.$$ By having in mind the equation $$|\eta|=\frac{|4\mathbf{c}|}{|z|^2},$$ and using the formula \eqref{curva}, after long but straightforward calculations
we get
\[\begin{split} \mathcal{K}=\mathcal{K}_{\mathbf{c}}(\chi(z))&=-\frac{\sign(\mathbf{c})}{t^4 \rho^6} \bigg(8 \mathbf{c} t^2 \rho'^2+\rho^2 \left(4 \mathbf{c}+t^4 \rho'^2\right)\\&+4 \mathbf{c} t \rho \left(\rho'-t \rho''\right)-t^3 \rho^3 \left(\rho'+t \rho''\right)\bigg)    \end{split}\]
where  $t=p(s)$ and $\wp=\rho(t)$,
where $\mathbf{c}$ is a constant with
$$4\mathbf{c}+\min_{t\in[R,1]}t^2\rho^2(t)\ge 0.$$
Then for $\mathbf{c}<0$ $$\mathcal{K}=\frac{4 \mathbf{c} (\rho+t\rho')^2}{t^4 \rho^6}+\mathcal{K}_\wp \left(1+\frac{4 \mathbf{c}}{t^2 \rho^2}\right) .$$
For $\mathbf{c}>0$ we have $$ \mathcal{K} = -\frac{4 \mathbf{c} \left(\rho+t \rho'\right)^2}{\left(4 \mathbf{c} \rho+t^2 \rho^3\right)^2}+\mathcal{K}_\wp\frac{t^2\rho^2}{4\mathbf{c}+t^2\rho^2}.$$
For $\wp$ being an Euclidean metric we have $$\mathcal{K}=\left\{
                                                  \begin{array}{ll}
                                                    \frac{4 \mathbf{c}}{t^4}, & \hbox{if $\mathbf{c}<0$;} \\
                                                    -\frac{4 \mathbf{c}}{\left(4 \mathbf{c} +t^2\right)^2}, & \hbox{if $\mathbf{c}>0$.}
                                                  \end{array}
                                                \right.$$

{\bibliographystyle{abbrv} 

\begin{thebibliography}{10}

\bibitem{fors}
A.~Alarc{\'o}n, F.~Forstneri{\v{c}}, and F.~J. L{\'o}pez.
\newblock {\em Minimal surfaces from a complex analytic viewpoint}.
\newblock Springer Monogr. Math. Cham: Springer, 2021.

\bibitem{minda}
A.~F. {Beardon} and D.~{Minda}.
\newblock {The hyperbolic metric and geometric function theory}.
\newblock In {\em Proceedings of the international workshop on quasiconformal
  mappings and their applications, December 27, 2005--January 1, 2006}, pages
  9--56. New Delhi: Narosa Publishing House, 2007.

\bibitem{Duren2004}
P.~Duren.
\newblock {\em Harmonic mappings in the plane}, volume 156 of {\em Cambridge
  Tracts in Mathematics}.
\newblock Cambridge University Press, Cambridge, 2004.

\bibitem{FinnOsserman1964}
R.~Finn and R.~Osserman.
\newblock On the {G}auss curvature of non-parametric minimal surfaces.
\newblock {\em J. Analyse Math.}, 12:351--364, 1964.

\bibitem{Laurent}
L.~{Hauswirth}, R.~S. {Earp}, and E.~{Toubiana}.
\newblock {Associate and conjugate minimal immersions in \(M \times R\)}.
\newblock {\em {Tohoku Math. J. (2)}}, 60(2):267--286, 2008.

\bibitem{hayman}
W.~K. {Hayman} and P.~B. {Kennedy}.
\newblock {\em {Subharmonic functions. Vol. I}}, volume~9.
\newblock Academic Press, London, 1976.

\bibitem{he}
E.~Heinz.
\newblock On one-to-one harmonic mappings.
\newblock {\em Pac. J. Math.}, 9:101--105, 1959.

\bibitem{hsw}
S.~Hildebrandt, J.~Jost, and K.-O. Widman.
\newblock Harmonic mappings and minimal submanifolds.
\newblock {\em Invent. Math.}, 62:269--298, 1980.

\bibitem{jostconf}
J.~{Jost}.
\newblock {\em {Harmonic maps between surfaces (with a special chapter on
  conformal mappings)}}, volume 1062.
\newblock Springer, Cham, 1984.

\bibitem{jost}
J.~{Jost}.
\newblock {\em {Riemannian geometry and geometric analysis}}.
\newblock Cham: Springer, 2017.

\bibitem{calculus1}
D.~{Kalaj}.
\newblock {Energy-minimal diffeomorphisms between doubly connected Riemann
  surfaces}.
\newblock {\em {Calc. Var. Partial Differ. Equ.}}, 51(1-2):465--494, 2014.

\bibitem{jlondon}
D.~{Kalaj}.
\newblock {Deformations of annuli on Riemann surfaces and the generalization of
  Nitsche conjecture}.
\newblock {\em {J. Lond. Math. Soc., II. Ser.}}, 93(3):683--702, 2016.

\bibitem{kalajlamel}
D.~{Kalaj} and B.~{Lamel}.
\newblock {Minimisers and Kellogg's theorem}.
\newblock {\em {Math. Ann.}}, 377(3-4):1643--1672, 2020.

\bibitem{complex4}
D.~Kalaj and J.-F. Zhu.
\newblock Schwarz {Pick} type inequalities for harmonic maps between {Riemann}
  surfaces.
\newblock {\em Complex Var. Elliptic Equ.}, 64(8):1364--1375, 2019.

\bibitem{koh1}
N.-T. {Koh}.
\newblock {Hereditary convexity for harmonic homeomorphisms}.
\newblock {\em {Indiana Univ. Math. J.}}, 64(1):231--243, 2015.

\bibitem{tohoku}
M.~{Kokubu}.
\newblock {Weierstrass representation for minimal surfaces in hyperbolic
  space}.
\newblock {\em {Tohoku Math. J. (2)}}, 49(3):367--377, 1997.

\bibitem{koli}
L.~V. {Kovalev} and L.~{Li}.
\newblock {On the existence of harmonic mappings between doubly connected
  domains}.
\newblock {\em {Proc. R. Soc. Edinb., Sect. A, Math.}}, 148(3):619--628, 2018.

\bibitem{meekshel}
W.~H.~I. {Meeks} and B.~{White}.
\newblock {Minimal surfaces bounded by convex curves in parallel planes}.
\newblock {\em {Comment. Math. Helv.}}, 66(2):263--278, 1991.

\bibitem{nelli}
B.~{Nelli} and H.~{Rosenberg}.
\newblock {Minimal surfaces in \(\mathbb{H}^2\times\mathbb{R}\).}
\newblock {\em {Bull. Braz. Math. Soc. (N.S.)}}, 33(2):263--292, 2002.

\bibitem{ili}
R.~Sa~Earp and E.~Toubiana.
\newblock Screw motion surfaces in {{\(\mathbb H^2 \times \mathbb R\)}} and
  {{\(\mathbb S^2\times \mathbb R\)}}.
\newblock {\em Ill. J. Math.}, 49(4):1323--1362, 2005.

\bibitem{SchoenYau1997}
R.~Schoen and S.~T. Yau.
\newblock {\em Lectures on harmonic maps}.
\newblock Conference Proceedings and Lecture Notes in Geometry and Topology,
  II. International Press, Cambridge, MA, 1997.

\bibitem{shiffman}
M.~{Shiffman}.
\newblock {On surfaces of stationary area bounded by two circles, or convex
  curves, in parallel planes}.
\newblock {\em {Ann. Math. (2)}}, 63:77--90, 1956.

\bibitem{tam}
T.~Y.~H. Wan.
\newblock Constant mean curvature surface, harmonic maps, and universal
  {Teichm{\"u}ller} space.
\newblock {\em J. Differ. Geom.}, 35(3):643--657, 1992.

\end{thebibliography}
}

\end{document}